\def\square{\pst@object{square}}
\def\square@i(#1,#2)#3{{\use@par\solid@star\psframe[origin={#1,#2}](#3,#3)}}
\DeclareFontFamily{U}{tipa}{}
\DeclareFontShape{U}{tipa}{bx}{n}{<->tipabx10}{}
\newcommand{\arc@char}{{\usefont{U}{tipa}{bx}{n}\symbol{62}}}%
\newcommand{\arc}[1]{\mathpalette\arc@arc{#1}}
\newcommand{\arc@arc}[2]{%
  \sbox0{$\m@th#1#2$}%
  \vbox{
    \hbox{\resizebox{\wd0}{\height}{\arc@char}}
    \nointerlineskip
    \box0
  }%
}
\newcommand{\doublewedge}{\big@doubleop{\wedge}}
\newcommand{\big@doubleop}[1]{%
  \DOTSB\mathop{\mathpalette\big@doubleop@aux{#1}}\slimits@
}
\newcommand\big@doubleop@aux[2]{%
  \sbox\z@{$\m@th#1#2$}%
  \makebox[1.35\wd\z@][s]{$\m@th#1#2\hss#2$}%
}
\newcommand{\abs}[1]{\left|#1\right|}     
\newcommand{\Int}{\mbox{int}} 
\newcommand{\cyc}{\mbox{cyc}} 
\newcommand{\near}{\delta} 
\newcommand{\dnear}{\delta_{\Phi}} 
\newcommand{\tnear}{\mathop{\delta}\limits_t} 
\newcommand{\mnear}{\mathop{\delta}\limits_{\varepsilon}} 
\newcommand{\knear}{\mathop{\delta}\limits_{\kappa}} 
\renewcommand{\p@subfigure}{}
\theoremstyle{plain}
\newtheorem{theorem}{Theorem}
\newtheorem{lemma}{Lemma}
\newtheorem{proposition}{Proposition}
\newtheorem{remark}{Remark}
\newtheorem{definition}{Definition}
\newtheorem{example}{Example}
\newtheorem{axiom}{Axiom}
\begin{document}

\title{Connectedness, Continuity, and Proximities in Temporal Digital Topology}

\author[J.F. Peters]{James Francis Peters}
\address{
Department of Electrical \& Computer Engineering,
University of Manitoba, Winnipeg, MB, R3T 5V6, Canada and
Department of Mathematics, Adıyaman University, Adıyaman, Türkiye.
}
\email{james.peters3@umanitoba.ca}
\thanks{The research has been supported by the Natural Sciences \&
Engineering Research Council of Canada (NSERC) discovery grant 185986 
and Instituto Nazionale di Alta Matematica (INdAM) Francesco Severi, Gruppo Nazionale per le Strutture Algebriche, Geometriche e Loro Applicazioni grant 9 920160 000362, n.prot U 2016/000036 and Scientific and Technological Research Council of Turkey (T\"{U}B\.{I}TAK) Scientific Human
Resources Development (BIDEB) under grant no: 2221-1059B211301223.}
\author[T. Vergili]{Tane Vergili}
\address{
Department of Mathematics, Karadeniz Technical University, Trabzon Türkiye.
}
\email{tane.vergili@ktu.edu.tr}

\subjclass[2010]{54E05 (proximity); 55P62 (Rational homotopy theory);}

\date{}

\begin{abstract}
This paper introduces the structure and axioms for a temporal digital topology (TDT) with the focus on digital connectedness, continuity and proximities in TDT spaces.  Results are given for temporal digital adjacencies, connectedness and proximities that occur in time-constrained digital
topological spaces.  The application for this work is a proximity space view of elapsed times present in every sequence of video frames.
\end{abstract}
\keywords{Adjacencies, Connectedness, Continuity, Elapsed time, Proximities, Temporal Digital Topology}
\maketitle
\tableofcontents

\section{Introduction}
This paper carries forward and extends recent work on digital topology (e.g.,~\cite{Bleile:digitaImageConstructions},\\
~\cite{BoxerStaecker2016}, especially, ~\cite{Vergili2020DigitalHausdorff}), with important advances beyond the original work on adjacency in digital images by A. Rosenfeld~\cite{Rosenfeld:adjacency}. These advances reflect recent work on nearness in proximity spaces (see, e.g.,~\cite{IsKaraca2023ProximalHomotopy},
~\cite{HaiderPeters2021temporalProximities},~\cite{PetersVergili2023goodCoverings}). In this paper, we call attention to the contrast between the view of digital images in terms of the image geometry limited to the location and orientation of image picture elements (pixels) and a view of picture elements called voxels in time-constrained digital images called frames in videos in which results are given not only in terms of voxel location but also relative to the more definitive voxel value.  The combination of voxel location and voxel value provides a precise basis for distinguishing between video frame proximities, both spatially and descriptively.

Briefly, in a digital topology space $X$, a planar digital image $Img$ is a mapping $Img: X\to \mathbb{R}$ whose domain $X$ is a collection of lattice points in which each picture element $p$ has location $\mathbb{Q}\times \mathbb{Q}$ and value $Img(p)\in \mathbb{R}$ as in~\cite{Bleile:digitaImageConstructions}, instead of $\mathbb{Z}\times \mathbb{Z}$ as in~\cite{Boxer1999}. This form of picture element location makes it possible to consider pixels with integer coordinates as well as points with fractional coordinates in subpixels that are located between pixels. In addition, the usual form of adjacency~\cite{Boxer1999, Klette2004, Rosenfeld1979}, now includes column, row, point and subimage adjacency in addition to 4-adjacency and 8-adjacency.  The space $X$ is Hausdorff, {\em i.e.}, distinct picture elements in a digital image reside in disjoint neighborhoods. 

There are two distinct type digital images, namely, an image (denoted by $Img_t$ or simply by ${fr_t}$ which is in a time-ordered sequence of images called frames in which each frame occurs at an elapsed time $t$ in a video and single images (denoted by $Img$) that are not frames a video.  A picture element  $p \in X$ with integer coordinates in a single image is called a {\bf pixel} and a {\bf voxel} in video frame.

\begin{definition}\label{def:pixelValue}{\rm {\bf [Pixel Value]}}\\
	Let $X$ be a digital topological space, $p$ a picture element in an image.  Every image  $Img$ is a mapping, {\em i.e.},
	\begin{align*}
		Img:X &\to \mathbb{R},\ \mbox{a digital image with}\\
		Img(p) &\in \mathbb{R}\ \left(\mbox{\bf pixel value} \right). 
	\end{align*}
\end{definition} 

\begin{example}
	Sample color, grayscale and binary video frames are shown in Figure~\ref{fig:frames}. For example, the cylinder in Figure~\ref{fig:fr1} has a green interior and magenta exterior.  By contrast, frames $fr_2, fr_3$ in Figure~\ref{fig:frames} are examples of grayscale and binary frames (no color).  The frame in Figure~\ref{fig:fr2} displays a grayscale cylinder in which the interior and exterior of the cylinder are different shades of gray.  And the frame in  Figure~\ref{fig:fr3} displays a binary cylinder in which the interior is black and exterior of the cylinder is white. 
	For any grayscale frame $fr$ containing single picture element $p$ (called a voxel), we have $fr(p)\in [0,1]$ with 
	\begin{align*}
		fr(p) &= 0,\ \mbox{black}\\
		fr(p) &= 0.5,\ \mbox{median gray}\\
		fr(p) &= 1.
	\end{align*} 
\end{example}

\begin{remark}
	For simplicity and in keeping with the approach in~\cite{Bleile:digitaImageConstructions}, we assume that video frames are grayscale in our illustrations.
\end{remark}

\begin{figure}
	\centering
	\begin{subfigure}[b]{0.3\textwidth}
	\centering
	\includegraphics[width=\textwidth]{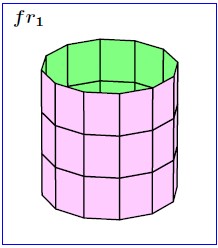}
	\caption{Color frame $fr_1$}
	\label{fig:fr1}
	\end{subfigure}
	\hfill
	\begin{subfigure}[b]{0.3\textwidth}
	\centering
	\includegraphics[width=\textwidth]{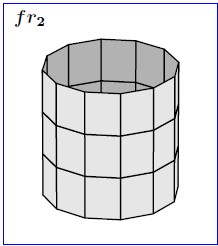}
	\caption{Greyscale $fr_2$}
	\label{fig:fr2}
	\end{subfigure}
	\hfill
	\begin{subfigure}[b]{0.3\textwidth}
	\centering
	\includegraphics[width=\textwidth]{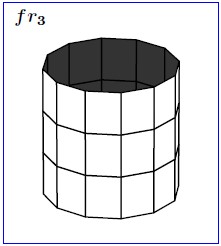}
	\caption{Binary frame $fr_3$}
	\label{fig:fr3}
	\end{subfigure}
	\caption{Color, Greyscale \& binary video frames}
	\label{fig:frames}
\end{figure}

\begin{figure}
	\centering
	\begin{subfigure}[b]{1.0\textwidth}
		\centering
		\includegraphics[width=\textwidth]{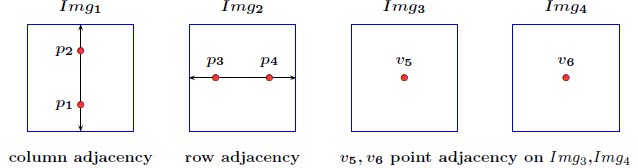}
		\caption{ Location Adjacency in separate digital images}
		\label{fig:pointwiseAdjacency}
	\end{subfigure}
	\begin{subfigure}[b]{1.0\textwidth}
		\centering
		\includegraphics[width=\textwidth]{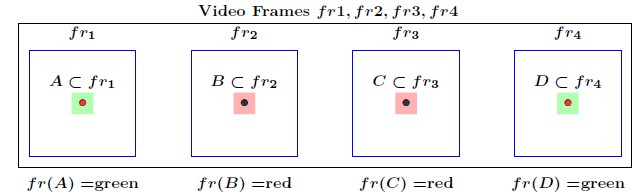}
		\caption{ Value adjacency across video frames}
		\label{fig:subimageAdjacency}
	\end{subfigure}
	\caption{$\kappa$-adjacency: {\bf column} and {\bf row} adjacency on single digital images $Img_1, Img_2$, {\bf point Location-adjacency} across images $Img_3, Img_4$ and {\bf Location-Value-adjacency} across video frames $fr_1, fr_2, fr_3, fr_4$ with $A \ \near\ D, B\ \near \ C$.}
	\label{fig:kAdjacency}
\end{figure}

\begin{remark}
	The usual picture element is in a set of lattice points in $\mathbb{Z}\times\mathbb{Z}$ in a Euclidean 2-dimensional space (see, {\em e.g.},\cite[\S 2.1]{Boxer1999}).  In Definition~\ref{def:pixelValue}, a picture element $p\in \mathbb{Q}\times \mathbb{Q}$, which can either be a pixel or a sub-pixel, {\em i.e.}, point between pixels.  On a micrometer scale, there are vast image regions between voxels that are under-represented by voxels that we see in a video frame. Also from Definition~\ref{def:pixelValue}, a digital image $Img$ maps each picture element to a real value. The distinction between location and value of a picture element in a digital image facilitates a persistent homology of digital images viewed as simplicial complexes, {\em e.g.}, intervals of persistence such as those in B. Bleile {\em et. al.}\cite[\S 6, p. 24]{Bleile:digitaImageConstructions}.
\end{remark} 

\begin{remark}
	The gray box containing a red dot representing a voxel $frE(x,y)$ in video frame $frE$ in Figure~\ref{fig:videoFrame} indicates a region surrounding $frE(x,y)$, where a possible sub-voxel is located at $frE(x+\frac{1}{2},y+\frac{1}{2})$.  A {\bf sub-voxel} is a voxel present in between voxels at integer locations such as location $(x,y)\in \mathbb{Z}\times\mathbb{Z}$ in the frame in Figure~\ref{fig:videoFrame}.
\end{remark}

\begin{definition}\label{def:video}{\rm {\bf [Video]}}\\
	In a space $X$, a {\bf video} is a collection of time-ordered subsets $2^X$ called {\bf frames}.  Let $fr E \in 2^X$ be a frame.  A picture element  in $fr E$ is called a {\bf (sub)voxel}, where each (sub)voxel $fr E(x,y,t)\in fr E$ has coordinates $x,y,t$ that include an elapsed-time component $t$ as well as horizonal and vertical components $x,y$.
\end{definition}

\begin{example}{\bf [Video Frame and its Sub-Frame]}\\
	A video frame $fr E$ and a subframe $fr A\subset  fr E$ are shown in Figure~\ref{fig:videoFrame} containing a voxel $fr A(x,y)$ with spatial coordinates $x,y$.  The red dot {\textcolor{red}{$\boldsymbol{\bullet}$}} indicates the centroid of frame $fr A$.
\end{example}

\begin{figure}
\centering 
\scalebox{0.8}{\includegraphics{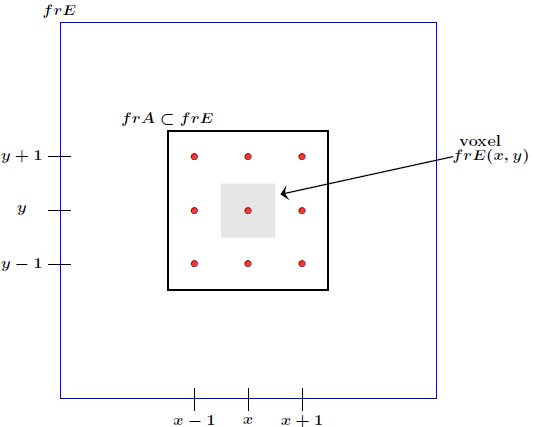}}
\caption{Voxel in a video frame}
\label{fig:videoFrame}
\end{figure}

\begin{remark} {\bf [Single Digital Image vs. Video Frame]}\\
	Unlike a single digital image, each frame $fr_t$ in a video $E$ in a space $X$ is a collection of two kinds of distinct subsets, namely, background and foreground,
	which are defined relative to a subsequent frame $fr_{t+1}\in E$.
\end{remark} 

Let $fr A$ be a $3\times 3$ subframe with center voxel at $fr A(x,y)$ row-adjacent to  $fr A(x+1,y)$ column-adacent to $fr A(x,y+1)$ in a video frame $fr E$ in a space $X$. The timeless digital partial derivative $\frac{\partial fr A(x,y)}{\partial x}$ of a voxel $fr A(x,y)$ in the horizontal direction is defined by
\[
\frac{\partial fr A(x,y)}{\partial x} = fr A(x+1,y) - fr A(x,y). 
\]
Similarly, the partial derivative $\frac{\partial fr A(x,y)}{\partial y}$ in the vertical direction is defined by
\[
\frac{\partial fr A(x,y)}{\partial y} = fr A(x,y+1) - fr A(x,y).
\]

\noindent This form of digital partial derivative appears in~\cite[\S 4.5.1, p. 98]{SolomonBreckon2011} (see, also, ~\cite[\S 3.3.1, p. 95]{Aubert2006heat}).

Time-ordering of subsets in a space $X$ occurs in the case where each subset in $X$ appears after an elapsed time.  

\begin{definition}\label{def:videoForeground}{\rm\bf [Video Frame Background and Foreground]}\\
	In a space $X$, a {\bf video} is a collection of time-ordered subsets in $2^X$ called {\bf frames}.  Let  $fr_t E, fr_{t+1} E\in 2^X$ be temporally adjacent video frames in $X$, {\em i.e.}, let $t,t+1$ be the elapsed times of frames $fr_t E, fr_{t+1} E$.  The {\bf background} of $fr_t E$ is the collection of all voxels $fr_{t} E(x,y,t)$ at location $x,y$ at time $t$  such that
	\[
	\abs{\frac{\partial fr_{t+1} E(x,y,t+1)}{\partial x}-\frac{\partial fr_{t} E(x,y,t)}{\partial t}} = 0,\ \mbox{for all}\ x,y.
	\] 
	and
	\[
	\abs{\frac{\partial fr_{t+1} E(x,y,t+1)}{\partial y}-\frac{\partial fr_{t} E(x,y,t)}{\partial y}} = 0.
	\]
	That is, if we compare temporally adjacent frames at elapsed times $t, t+1$, corresponding voxel lumens\footnote{The unit of measurement of the value of a voxel $fr_t E(x,y,t)$ is {\bf lumens}  (brightness), which is also used to classify light bulbs.}  $fr_t E(x,y,t), fr_{t+1} E(x,y,t+1)$ 
	In other words, voxels $fr_t E(x,y,t),fr_{t+1} E(x,y,t+1)$ belong to the background of a video frame at time $t$, provided there is no change in the partial derivatives of $fr_t E, fr_{t+1} E$.   Similarly, voxels $fr_t E(x,y,t),fr_{t+1} E(x,y,t+1)$ belong  to the {\bf foreground} of a video frame, provided there is a rate-of-change in corresponding voxels in temporally adjacent frames, i.e.,
	\[
	\abs{\frac{\partial fr_{t+1} E(x,y,t+1)}{\partial x}-\frac{\partial fr_{t} E(x,y,t)}{\partial x}} > 0,
	\] 
	or
	\[
	\abs{\frac{\partial fr_{t+1} E(x,y,t+1)}{\partial y}-\frac{\partial fr_{t} E(x,y,t)}{\partial y}} > 0.
	\] 
\end{definition}

\begin{remark}{\bf [What a video frame foreground tells us]}\\
	Interest in the foreground of a video frame stems from the fact that changes in voxel lumens values in video frames are associated with motion that has been recorded in a video.  Tracking video frame voxel changes leads to persistence diagrams which plot the appearance, disappearance and possible reappearance of changing foreground voxel values over periods of time. 
\end{remark}

\begin{figure}
	\centering 
	\scalebox{0.8}{\includegraphics{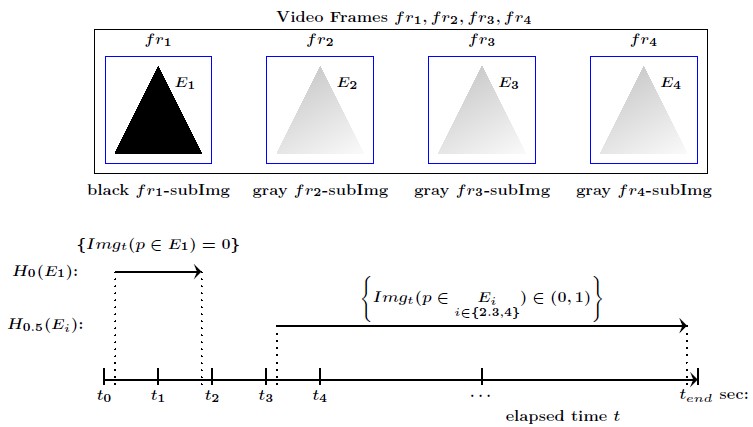}}
	\caption{Video frame shape elapsed time persistence diagram}
	\label{fig:framePersistenceDiagram}
\end{figure}

\begin{example} {\bf [Temporal Intervals of Persistence Diagram]}\\
	Let $p \in X$ be a pixel and $H_{i\in Z}:2^X\to 2^{\mathbb{R}^2}$ defined by
	\begin{align*}
		fr\in 2^X  &=\ \mbox{video frame}.\\
		Img_t &= fr_t\ \forall t\in [t_0,t_{end}].\\
		E &\subset fr\ \mbox{bounded frame region}.\\
		p &\in E.\\
		H_0(E) &=  \left\{t \ : \ Img_t(p)=0, \ \forall p\in E\right\}     
		\  \mbox{(lifespan of black pixels)}. \\
		H_{0.5}(E) &=     \left\{t \ : \ Img_t(p), \in (0,1)  \ \forall p\in E\right\}     \ \mbox{(lifespan of gray pixels)}.\\
		H_1(E) &=   \left\{t \ : \ Img_t(p) = 1, \ \forall p\in E\right\} \ \mbox{(lifespan of white pixels)}.
	\end{align*}
	Consider a moving object appears as a bounded foreground region recorded in a video frame $fr_t$. Then this moving object - as a bounded region -is partitioned into (bounded) subregions. For subregions that are pairwise distinct, the closures of two subregions may intersect along their boundaries, and each subregion is $\knear$ connected  and $fr_t(p)$ is fixed  for all voxels $p$ in a subregion. The collection of $\knear$-connected (Definition~\ref{def:knearconnected}) subregions will cover the moving object so that we could define the "cat" number of a moving object. The Cat number would be the minimum number of subregions that cover the moving object. 
	Also, given an anonymous frame $fr_t$ on $X$, if two distinct pixels/voxels $p$ and $q$ are $\knear$-adjacent and $fr_t(p)=fr_t(q)$, then these two pixels/voxels are a strongly connected pair.
For simplicity, a sample video frame shape elapsed time persistence diagram is shown in Fig.~\ref{fig:framePersistenceDiagram} relative to a black and several gray triangle shapes in a sequence of video frames.  This diagram indicates that\newline \vspace*{-0.49cm}
\begin{figure}[h]
	\centering 
	\scalebox{0.8}{\includegraphics{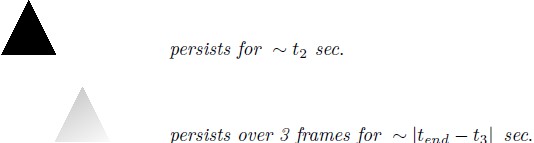}}
\end{figure}

\noindent In other words, a black $\color{black}{\boldsymbol{\bigtriangleup}}$ appears only in frame $fr_1$, persisting for approximately $t_2$ sec. and then disappears.  By contrast, a gray $\bigtriangleup$ first appears in frame $fr_2$ and then reappears in frames $fr_3$ and $fr_4$, persisting for approximately $\abs{t_{end}-t_3}$ sec.
\end{example} 

\begin{definition}\label{def:voxel}{\rm {\bf [Voxel Location]}}\\
	A planar {\bf voxel} is a picture element in a video frame $fr_t$ is a time-constrained digital image.  That is, every planar video frame is a digital image in which each voxel $p$ has a location $(r,c,t)\in \mathbb{Z}\times\mathbb{Z}\times\mathbb{R}_{\geq 0}$.  Every voxel also has an elapsed time coordinate $t$.
\end{definition}

\begin{definition}\label{def:voxel}{\rm {\bf [Temporal digital topological space]}}\\
	A temporal digital topological (TDT) space is a set $X\subset \mathbb{Q}\times\mathbb{Q}\times\mathbb{R}$ with
\begin{align*}
fr E &\in 2^X,\ (\mbox{{\bf Frame $E$ in TDT space $X$}}) \\
p &= (x,y,t)\in X,\ \mbox{a picture element at location}\ (x,y)\ \mbox{at time}\ t,\\
p &\in \mathbb{Q}\times\mathbb{Q}\times\mathbb{R},\\
fr E &= \left\{p(x',y',t): x-0.5\leq x'\leq x+0.5,y-0.5\leq y'\leq y+0.5, t\geq 0 \right\}.
\end{align*} 
\end{definition}

Given a vovel $p=(x,y,t)$ in a frame $fr E$ in a TDT space $X$, let $bdy(p)$ be the set of points consisting of all subpixels in the boundary of $p$, i.e.,
\[ bdy(p)=\{ (r,s,t) \in \mathbb{Q}\times \mathbb{Q}\times \mathbb{R} \ | \ r=x\pm 0.5 \ \mbox{and} \ s=y\pm 0.5\}.\]

\begin{definition}\label{def:voxel}{\rm {\bf [Adjacent Picture Elements]}}\\
	Let $bdy(p)$ denote the boundary of a voxel $p\in fr E$ in a TDT space $X$.  Two picture elements $p,q$ are adjacent if and only if
	\[
	bdy(p) \cap bdy(q) \neq 0.
	\]
\end{definition}

\begin{definition}\label{def:voxelValue}{\rm {\bf [Voxel Value]}}\\
	A planar video frame $fr_t E\in 2^X$ (a time-constrained subset $fr_t E$ in a TDT space $X$) at time $t$ containing a voxel $p$ is a mapping $fr_t E: X\to \mathbb{R}$ with a frame value $fr_t E(p)\in \mathbb{R}$.  
\end{definition} 

\section{Digital Topology Axioms}  

We have the following axiom for digital images. For digital images, we have the following axioms. 

\begin{axiom}\label{axiom:digitalTopologySpace}{\rm {\bf [Planar Digital Topology Space}]}\\
	A {\bf digital topology space} is a Hausdorff space $X\subset \mathbb{Q}\times \mathbb{Q}$ containing digital images  $Img:X\to \mathbb{R}$.
\end{axiom}

\begin{axiom}\label{axiom:pictureElement}{\rm {\bf [Digital Picture and sub-Picture Elements]}}\\
	A digital image {\bf pixel} has location $(x,y)$ with $p\in \mathbb{Z}\times\mathbb{Z}$.  A video frame {\bf voxel} with elapased time $t$ has location $(x,y,t)$ with $p\in \mathbb{Z}\times\mathbb{Z}\times\mathbb{R}$.  A picture element between pixels is called a {\bf sub-pixel} at location $(r,c)\in \mathbb{Q}\times\mathbb{Q}$.  A picture element between voxels is called a {\bf sub-voxel} at location $(r,c,t)\in \mathbb{Q}\times\mathbb{Q}\times\mathbb{R}$.
\end{axiom}

\begin{axiom}\label{axiom:nonemptySubimage}{\rm {\bf [Digital Subimage (Subvoxel)]}}\\
	For $A\subset X$, a subimage  of an image $Img: X\to\mathbb{R}$ is a mapping $\left.Img\right|_A:A\to \mathbb{R}$.
	Every subimage in a digital image is nonempty.
\end{axiom}

\begin{axiom}\label{axiom:frame}{\rm {\bf [Voxel Value]}}.\\
	Every planar frame $fr_t E$ in a video is a time-constrained digital image in TDT (temporal digital topology) space $X\subset \mathbb{Q}\times \mathbb{Q}\times \mathbb{R}$. 
	A picture element in $fr_t E$ is called a ({\bf voxel} i.e., volume picture elements), since every voxel $p$ in $fr_t E$ at location $(x,y,t)$ has value $fr_t(p)\in\mathbb{R}$ with an elapsed time $t$. For simplicity, we write $fr_t E = Img_t : X \to \mathbb{R}$
\end{axiom}

\begin{remark}
	Since the focus here is on picture elements in digital images that are video frames, we write {\bf voxel}, instead of {\bf pixel}, a picture element in a single image that is not a video frame.   For simplicity, we usually write $fr_t$ instead of $fr_t E$ and $fr$ instead of $fr E$.
\end{remark}

\begin{figure}
	\centering 
	\scalebox{0.8}{\includegraphics{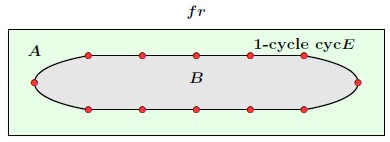}}
	\caption{1-cycle $E$ with interior $\Int(E)=B$ in video frame $fr$ with region $A=fr\setminus \cyc E$.}
	\label{fig:1-cycle2}
\end{figure}

\begin{axiom}\label{axiom:JordanCurve}{\rm {\bf [1-Cycle Simple Closed Curve]}}\\
	In a video frame, an {\bf edge} is a line segment with each end attached to a voxel.  A {\bf 1-cycle} is a sequence of edges (each with a common voxel) with no self-loops forming a simple closed curve with nonempty interior. 
\end{axiom}

\begin{example}
	A sample 1-cycle $\cyc E$ in a video frame $fr$ is shown in Figure~\ref{fig:1-cycle2}.
\end{example}

\begin{lemma}\label{lemma:nonemptyImage}
	A digital image is not an empty set.
\end{lemma}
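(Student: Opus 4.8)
The plan is to derive non-emptiness directly from the axioms already established, since a digital image is \emph{by definition} a mapping $Img:X\to\mathbb{R}$ whose domain $X$ is a digital topology space (Axiom~\ref{axiom:digitalTopologySpace}). First I would pin down what the claim asserts: to say that a digital image $Img$ ``is not an empty set'' is to say that its domain $X$ is nonempty, equivalently that $Img$ assigns a value in $\mathbb{R}$ to at least one picture element. Viewing $Img$ as its graph inside $X\times\mathbb{R}$, emptiness of the image is equivalent to emptiness of the domain, so the whole question reduces to showing $X\neq\emptyset$.

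The cleanest route is to invoke Axiom~\ref{axiom:nonemptySubimage} with the improper subset $A=X$. Since $X\subseteq X$, the restriction $\left.Img\right|_{X}=Img$ is itself a subimage of $Img$ in the sense of that axiom. The axiom asserts that every subimage of a digital image is nonempty; applying it to the particular subimage $\left.Img\right|_{X}$ yields $Img\neq\emptyset$ at once. This is a one-line specialization once the admissibility of $A=X$ is granted.

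Should a more self-contained argument be preferred, I would instead proceed by contradiction. Suppose $Img=\emptyset$; then its domain $X$ contains no pixel or voxel. But Axiom~\ref{axiom:pictureElement} and Definition~\ref{def:pixelValue} require each digital image to carry picture elements with locations in $\mathbb{Q}\times\mathbb{Q}$ and values $Img(p)\in\mathbb{R}$, and no such element can reside in an empty domain, a contradiction. Hence $X\neq\emptyset$ and the image is not an empty set.

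The main obstacle here is definitional rather than computational: the statement is essentially Axiom~\ref{axiom:nonemptySubimage} read off at the whole domain, so the only genuine care needed is (i) to confirm that the improper subset $A=X$ counts as a legitimate subimage, so that the appeal to the axiom is neither vacuous nor circular, and (ii) to fix the convention that identifies ``emptiness of the image'' with ``emptiness of the domain.'' Once these two points are settled, the proof collapses to the single application of the axiom described above.
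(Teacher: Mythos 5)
Your proposal is correct and follows essentially the same route as the paper: both derive the claim from Axiom~\ref{axiom:nonemptySubimage}, with your version merely making explicit that the improper subset $A=X$ qualifies as a subimage so that the axiom applies to $Img$ itself. The paper's proof is a terser statement of the identical idea, so no further comparison is needed.
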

\begin{proof}
	From Axiom~\ref{axiom:nonemptySubimage}, every subimage $subImg$ is nonempty.  Hence, $Img\neq \emptyset$.
\end{proof}

\begin{proposition}
	Every video frame is nonempty.
\end{proposition}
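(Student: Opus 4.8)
The plan is to recognize this proposition as an immediate corollary of Lemma~\ref{lemma:nonemptyImage}, routed through Axiom~\ref{axiom:frame}, which identifies every video frame with a digital image. First I would fix an arbitrary video frame $fr_t E \in 2^X$ in a TDT space $X$. By Axiom~\ref{axiom:frame}, every planar frame $fr_t E$ is a time-constrained digital image, and the axiom explicitly licenses writing $fr_t E = Img_t : X \to \mathbb{R}$. Hence a frame is, in particular, a digital image in the sense of Axiom~\ref{axiom:digitalTopologySpace}, and the question reduces to the nonemptiness of that image.

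The second step is to apply Lemma~\ref{lemma:nonemptyImage} directly: a digital image is not the empty set. Applying this to the image $Img_t$ associated with $fr_t E$ gives $fr_t E = Img_t \neq \emptyset$, which is exactly the claim; since $fr_t E$ was arbitrary, every video frame is nonempty. As an independent sanity check, one can also argue from the explicit description of a frame in the definition of a temporal digital topological space, where a frame is written as $fr E = \{p(x',y',t) : x-0.5 \leq x' \leq x+0.5,\ y-0.5 \leq y' \leq y+0.5,\ t \geq 0\}$: this set plainly contains its center voxel $p(x,y,t)$ and is therefore nonempty by inspection. I would present the Axiom~\ref{axiom:frame}/Lemma~\ref{lemma:nonemptyImage} route as the main argument, since it parallels the proof of Lemma~\ref{lemma:nonemptyImage} and keeps the development uniform, relegating the direct set-theoretic check to a confirming remark.

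There is no genuine obstacle here; the only point requiring care is a bookkeeping one, namely the dual description of a frame. A frame is presented both as an element of $2^X$ (a subset of $X$, i.e., a collection of voxels) and, via Axiom~\ref{axiom:frame}, as a mapping $Img_t : X \to \mathbb{R}$. To make the transfer of nonemptiness unambiguous, I would state explicitly that ``nonempty'' for the mapping means that its underlying set of picture elements is nonempty, which is precisely the sense in which Lemma~\ref{lemma:nonemptyImage} is phrased. With that convention fixed, the identification $fr_t E = Img_t$ carries nonemptiness from the image to the frame, completing the proof in two lines.
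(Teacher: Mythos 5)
Your proof is correct and follows exactly the paper's own route: Axiom~\ref{axiom:frame} identifies each video frame with a (time-constrained) digital image, and Lemma~\ref{lemma:nonemptyImage} then yields nonemptiness. The extra set-theoretic sanity check and the remark clarifying the frame-as-set versus frame-as-mapping bookkeeping are harmless additions beyond what the paper records.
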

\begin{proof}
	From Axiom~\ref{axiom:frame}, a video frame is a digital image.
	Hence, from Lemma~\ref{lemma:nonemptyImage}, every video frame in nonempty.
\end{proof}

\begin{theorem}\label{theorem:Jordan}{\rm {{\bf [Jordan Curve Theorem]}}}
	Every simple, closed curve partitions on planar region partitions the region into disjoint subregions.
\end{theorem}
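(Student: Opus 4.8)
The plan is to exploit the combinatorial structure supplied by Axiom~\ref{axiom:JordanCurve} rather than reprove the classical continuous Jordan Curve Theorem from scratch. By that axiom a $1$-cycle $\cyc E$ in a video frame $fr$ is a simple closed curve with \emph{nonempty} interior, which I abbreviate $B=\Int(E)$, and I set $A=fr\setminus\cyc E$ exactly as in Figure~\ref{fig:1-cycle2}. The goal is to exhibit a decomposition of the planar region $fr$ into pairwise disjoint pieces, so I would first make the candidate partition explicit: the voxels lying on the edges of the cycle form $\cyc E$, and every remaining voxel of $A$ is to be labelled \emph{interior} or \emph{exterior}.

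First I would define the labelling by a discrete crossing-parity rule. For a voxel $p\in A$, pick a ray in the frame issuing from $p$ that is \emph{generic}, meaning it meets no voxel of $\cyc E$ at a vertex, and count how many edges of $\cyc E$ it crosses; call $p$ interior when this count is odd and exterior when it is even. The crucial step is well-definedness: the count must not depend on the chosen ray. This I would establish by a sweeping argument --- as the ray rotates about $p$, its crossing number changes only when it passes a voxel of the cycle, and because $\cyc E$ is closed with no self-loops each such event contributes an even change, so the parity is preserved.

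Next I would check the three conditions that make this a genuine partition. For disjointness, a voxel cannot carry both parities, so the interior set $B$ and the exterior set are disjoint and, by construction, both avoid $\cyc E$. For covering, every voxel of $fr$ lies on $\cyc E$ or in $A$, and each voxel of $A$ receives exactly one label, giving $fr = \cyc E \sqcup B \sqcup(\text{exterior})$. For the separation property I would use the adjacency relation $\bdy(p)\cap\bdy(q)\neq\emptyset$: I claim any chain of pairwise adjacent voxels joining an interior voxel to an exterior one must meet $\cyc E$, because stepping between cells of opposite parity forces the chain to cross an edge of the cycle, which is precisely where the parity jumps by one.

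The step I expect to be the main obstacle is the well-definedness of the parity rule inside the paper's mixed-coordinate model, where $p\in\mathbb{Q}\times\mathbb{Q}\times\mathbb{R}$ and the boundary $\bdy(p)$ is given by the $\pm 0.5$ offsets. Because edges are segments attached to voxels, a rotating ray can pass through the shared boundary point of two consecutive edges, and such a degenerate crossing must be counted consistently (with weight two, or discarded) so that the sweeping argument still yields an even change. Handling these degenerate incidences in the rational/integer lattice --- rather than in the smooth plane, where a transversality argument would suffice --- is the delicate point; once it is in place, disjointness and covering are immediate, and the separation property follows at once from the local parity jump across each edge.
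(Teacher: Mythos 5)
The first thing to note is that the paper does not prove this theorem at all: Theorem~\ref{theorem:Jordan} is stated as the classical Jordan Curve Theorem and is simply invoked as known background, its only role being the citation inside the proof of Lemma~\ref{lemma:digitalJordan}. So your proposal is not ``the same approach as the paper'' --- it is an attempt to supply a proof where the authors supply none, and it has to be judged on its own terms. On those terms, the parity/ray-crossing argument is the right classical strategy for the curves the paper actually works with: a 1-cycle in the sense of Axiom~\ref{axiom:JordanCurve} is a polygonal simple closed curve, and for polygonal curves the crossing-parity proof of the Jordan theorem is standard and can be made complete.

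That said, your write-up has two genuine gaps. First, the theorem as stated quantifies over \emph{every} simple closed curve in a planar region, while your argument only applies to 1-cycles (finite sequences of edges): for a general simple closed curve the number of intersections with a ray need not be finite or even meaningful, and the parity argument collapses. Either the statement must be read as restricted to 1-cycles (which is all that Lemma~\ref{lemma:digitalJordan} uses downstream), or one must appeal to the full topological theorem, which your combinatorial argument cannot reach. Second, the step you yourself flag and defer --- well-definedness of the parity under degenerate incidences, together with local constancy of parity across adjacent voxels off the curve, which is what your separation claim silently rests on --- is precisely the mathematical content of the theorem in the polygonal case; writing ``once it is in place'' leaves a plan, not a proof. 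The standard repair is to count a crossing of the ray with an edge of $\cyc E$ only when the edge's endpoints lie strictly on opposite sides of the ray's supporting line (a vertex on the ray contributing according to whether the two incident edges cross or merely touch the line), and then to verify that any two voxels $p,q$ with $\bdy(p)\cap \bdy(q)\neq\emptyset$ that both avoid $\cyc E$ receive the same parity. Until that verification is written out, your disjointness and covering claims stand, but the separation claim --- the actual Jordan property --- remains unproven.
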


\begin{lemma}\label{lemma:digitalJordan}
	Every digital image with a subimage that has a simple, closed curve on its boundary satisfies the Jordan Curve Theorem.  
\end{lemma}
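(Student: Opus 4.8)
The plan is to reduce the digital statement to the classical Jordan Curve Theorem recorded as Theorem~\ref{theorem:Jordan}, using the fact that every digital topology space sits inside the Euclidean plane. First I would fix a digital image $Img:X\to\mathbb{R}$ with $X\subset\mathbb{Q}\times\mathbb{Q}$ (Axiom~\ref{axiom:digitalTopologySpace}) together with a subimage $\left.Img\right|_A:A\to\mathbb{R}$ (Axiom~\ref{axiom:nonemptySubimage}) whose boundary carries a simple, closed curve $\gamma$, as in the hypothesis. By Lemma~\ref{lemma:nonemptyImage} neither $Img$ nor its subimage is empty, so $\gamma$ bounds a nondegenerate region.

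The central observation is that $\gamma$ is assembled from edges, i.e., line segments joining voxels with no self-loops (Axiom~\ref{axiom:JordanCurve}), so $\gamma$ is a closed polygonal arc. Because $A\subset X\subset\mathbb{Q}\times\mathbb{Q}\subset\mathbb{R}\times\mathbb{R}=\mathbb{R}^2$, this arc lives in the Euclidean plane. Next I would check that $\gamma$ is topologically simple: since each edge of the 1-cycle meets exactly one predecessor and one successor at a shared voxel and there are no self-loops, the induced map of the circle into $\mathbb{R}^2$ is injective. With $\gamma$ established as a genuine simple closed curve in $\mathbb{R}^2$, I would invoke Theorem~\ref{theorem:Jordan} to conclude that $\gamma$ separates its ambient planar region into disjoint subregions---a bounded interior and an unbounded exterior---the interior being nonempty by Axiom~\ref{axiom:JordanCurve}.

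The final step would transfer this separation back to the digital image: the voxels of $Img$ are partitioned by $\gamma$ into those lying in the interior and those lying in the exterior, which is exactly the assertion that the digital image satisfies the Jordan Curve Theorem.

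The hard part will be the passage from the purely combinatorial 1-cycle to a topologically simple closed curve in $\mathbb{R}^2$. One must verify that the discreteness of the voxel lattice together with the rational-coordinate convention of Axiom~\ref{axiom:pictureElement} introduces neither spurious self-intersections nor gaps along $\gamma$, so that injectivity and closedness of the polygonal arc truly hold; once these are secured, the classical theorem supplies the separation for free.
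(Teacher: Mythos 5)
Your proposal follows essentially the same route as the paper's own proof: invoke Axiom~\ref{axiom:JordanCurve} to identify the subimage's boundary as a simple closed curve, then apply Theorem~\ref{theorem:Jordan} to obtain the partition into disjoint interior and exterior subregions. The additional care you take---embedding the polygonal 1-cycle in $\mathbb{R}^2$ via $\mathbb{Q}\times\mathbb{Q}\subset\mathbb{R}^2$ and verifying injectivity of the curve---is extra rigor on the same argument, not a different approach.
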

\begin{proof}
	Let $subImg$ be a subimage in a planar digital image $Img$ with edges attached between pixels on its boundary.  From Axiom~\ref{axiom:JordanCurve}, $subImg$ is bounded by a simple, closed curve.  Hence, from Theorem~\ref{theorem:Jordan}, $subImg$ partitions $Img$ into disjoint subregions $A,B$ with $A$ exterior to $subImg$ and $B$ in the interior of $subImg$.
\end{proof}

\begin{example}
	A sample 1-cycle in a video frame $fr$ is shown in Figure~\ref{fig:1-cycle2}.
\end{example}

\begin{theorem}
	Every subimage in a video frame satisfies the Jordan Curve Theorem.
\end{theorem}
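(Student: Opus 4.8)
The plan is to reduce the statement to the already-established Lemma~\ref{lemma:digitalJordan} by observing that a video frame is, structurally, nothing other than a digital image. First I would invoke Axiom~\ref{axiom:frame}, which asserts that every planar frame $fr_t E$ in a video is a time-constrained digital image, written $fr_t E = Img_t : X \to \mathbb{R}$. Consequently any subimage residing inside a video frame is simultaneously a subimage residing inside a digital image, so the Jordan-type machinery developed for single digital images transfers without modification.

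Next I would verify that the hypothesis of Lemma~\ref{lemma:digitalJordan} is met, namely that the subimage has a simple, closed curve on its boundary. By Axiom~\ref{axiom:JordanCurve}, an edge in a video frame is a line segment with each end attached to a voxel, and a 1-cycle is a sequence of such edges (each sharing a common voxel) with no self-loops, forming a simple closed curve with nonempty interior. Hence the boundary of a subimage in $fr_t E$, being composed of edges joined at voxels, closes up into a 1-cycle and therefore into a simple, closed curve. This is exactly the configuration required to apply the single-image result.

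With the subimage identified as a subimage of a digital image whose boundary carries a simple closed curve, Lemma~\ref{lemma:digitalJordan} then delivers the conclusion directly: the subimage partitions the ambient frame into disjoint subregions, an exterior region $A$ and an interior region $B$, which is precisely what it means to satisfy the Jordan Curve Theorem (Theorem~\ref{theorem:Jordan}). I expect the proof itself to be short, amounting to the two citations above chained together.

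The main obstacle is definitional rather than computational: the argument hinges on confirming that the boundary edges of an \emph{arbitrary} video-frame subimage genuinely assemble into a single simple closed curve rather than, say, several disjoint loops or an open arc. To close this gap I would lean on the stipulations in Axiom~\ref{axiom:JordanCurve} that consecutive edges share a common voxel and that the edge sequence contains no self-loops, which together force the boundary to be one simple closed curve. Once that is secured, the reduction to Lemma~\ref{lemma:digitalJordan} is immediate.
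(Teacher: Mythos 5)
Your proposal follows essentially the same route as the paper's own proof: identify the video frame as a digital image (Axiom~\ref{axiom:frame}), invoke Axiom~\ref{axiom:JordanCurve} to obtain the simple closed curve, and conclude via Lemma~\ref{lemma:digitalJordan}. The definitional worry you raise at the end---whether an arbitrary subimage's boundary genuinely assembles into a single simple closed curve---is a gap the paper's proof silently skips over too (it simply asserts ``$A$ is a simple, closed curve'' from the axiom), so your version is, if anything, the more careful of the two.
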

\begin{proof}
	Every video frame $fr_t$ is a digital image.  Let $A$ be a subimage in a video frame.  From Axiom~\ref{axiom:JordanCurve}, $A$ is a simple, closed curve. Hence, from Lemma~\ref{lemma:digitalJordan}, $A$ satisfies the Jordan Curve Theorem.
\end{proof}

Spatially near subsets in a digital image ({\em i.e.}, subsets that share points) reside in a discrete proximity space.

\begin{definition}\label{def:discreteProximity} {\rm {\bf [Discrete Proximity]}} \\
	For any pair of nonempty subimages $A, B$ in a digital image $(X, Img)$, $A$ is near $B$ (denoted by $ A\ \near_d \ B$), provided $A$ and $B$ have points in common, {\em i.e.},  
	\[
	A\ \near_d \  B\ \mbox{iff}\ Img A\cap Img B\neq \emptyset.
	\]  
	Hence, $\delta_d$ is a discrete proximity relation and $(Img,\delta_d)$ is a discrete proximity space~\cite[\S 40.2, pp. 266-267]{Willard1970},~\cite[\S 1,p. 9]{Naimpally70}.
\end{definition}

The following forms of adjacencies for a pair of pixels $p$ and $q$ in $Img$ analogous to the $\kappa$-adjacency in digital topology~\cite{Boxer1999}. \\

\begin{compactenum}[1$^o$]
	\item [{\bf column adjacency}.] Two pixels $p=(x,y)$ and $q=(x',y')$ are column adjacent, provided $y=y'\pm 1$. 
	\item [{\bf row adjacency}.] Two pixels $p=(x,y)$ and $q=(x',y')$ are row adjacent, provided $x=x'\pm 1$. 
	\item [{\bf diagonal adjacency}.] Two pixels $p$ and $q$ are diagonal adjacent, provided they are both column and row adjacent.  \\
\end{compactenum}

Here, the column and row adjacency correspond to $4$-adjacency  and the diagonal adjacency corresponds to $8$-adjacency in a planar digital image~\cite{Boxer1999}.

\begin{definition} \label{def:adjacent} {\rm {[\bf Adjacent subimages}]} \\
	Given a digital image $(X, Img)$,  two subimages  $A$ and $B$, are {\bf adjacent}, denoted by $A \ \knear\ B$, provided there exist pixels $p\in A$ and $q \in B$ such that $p=q$ or $p$ and $q$ are adjacent. 
\end{definition}

\begin{remark}
	Notice that discrete proximity implies adjacency, i.e., $A \ \near_d \ B$ implies $A \ \knear \ B$.
\end{remark}

\begin{definition}  \label{def:knearconnected} {\rm {\bf [$\knear$-connectedness]}} \\
	A bounded subimage $E$ with a non-empty interior is said to be  {\bf $\knear$-connected}, provided for each pair of distinct pixels $p$ and $q$ in $E$, there exists a finite sequence of pixels $p=v_0, p_1, \dotsc, p_n=q$ such that two consecutive pixels $p_i$ and $p_{i+1}$ are adjacent for all $i=0,1,\dotsc,n-1$. 
\end{definition}

\begin{definition} \label{def:kappacont} {\rm {\bf [$\kappa$-continuity]}} \\
	Let $X$ and $Y$ be two digital images. We say that a function $f: X \to Y$ is $\kappa$-continuous, provided the images of adjacent subimages in $X$ are also adjacent in $Y$, i.e., $A \knear B$ implies $f(A) \knear f(B)$.  
\end{definition}

\begin{proposition}
	$\knear$-connected regions are preserved under a continuous digital functions.
\end{proposition}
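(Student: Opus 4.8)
The plan is to show that if $E\subseteq X$ is $\knear$-connected and $f:X\to Y$ is $\kappa$-continuous, then $f(E)$ is $\knear$-connected, by transporting an adjacency path in $E$ across $f$. First I would fix two distinct pixels $p',q'\in f(E)$ and choose preimages $p,q\in E$ with $f(p)=p'$ and $f(q)=q'$. Since $E$ is $\knear$-connected (Definition~\ref{def:knearconnected}), there is a finite sequence $p=v_0,v_1,\dots,v_n=q$ in $E$ with $v_i$ and $v_{i+1}$ adjacent for each $i$.

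The key step is to apply $\kappa$-continuity term by term. For each $i$, regard the singletons $\{v_i\}$ and $\{v_{i+1}\}$ as subimages; since $v_i$ and $v_{i+1}$ are adjacent, Definition~\ref{def:adjacent} gives $\{v_i\}\knear\{v_{i+1}\}$. By $\kappa$-continuity (Definition~\ref{def:kappacont}), $f(\{v_i\})\knear f(\{v_{i+1}\})$, that is, $\{f(v_i)\}\knear\{f(v_{i+1})\}$. Because each of these singletons contains only one pixel, this adjacency forces $f(v_i)=f(v_{i+1})$ or $f(v_i)$ adjacent to $f(v_{i+1})$. Hence the image sequence $f(v_0),f(v_1),\dots,f(v_n)$ runs from $p'$ to $q'$ inside $f(E)$ with each consecutive pair either equal or adjacent.

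Finally I would prune the image sequence by deleting each term that coincides with its predecessor. The reduced sequence still begins at $p'$ and ends at $q'$, lies in $f(E)$, and now has distinct consecutive terms; by the previous step, distinct consecutive terms must be adjacent. Since $p'\neq q'$, this yields a genuine adjacency path joining $p'$ and $q'$ in $f(E)$, so $f(E)$ is $\knear$-connected.

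I expect the main obstacle to be the non-injectivity of $f$: a continuous digital map may collapse several consecutive pixels to a single image pixel, so the naive image of an adjacency path need not be a path in the strict sense of Definition~\ref{def:knearconnected}. The pruning step handles this, but one must also verify the background hypotheses of that definition, namely that $f(E)$ is a bounded subimage with non-empty interior, so that $\knear$-connectedness is even meaningful for $f(E)$. I would note that boundedness of $f(E)$ follows from boundedness of $E$, while the non-empty interior is taken as part of the standing assumption on the regions under consideration.
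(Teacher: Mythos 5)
Your proof follows essentially the same route as the paper's: lift two distinct pixels of $f(E)$ back to $E$, take an adjacency path there, and push it through $f$ using $\kappa$-continuity applied pairwise along the path. In fact your version is more careful than the paper's own proof, which stops at asserting that the image sequence $f(p_0),\dotsc,f(p_n)$ is $\knear$-connected and omits both your pruning step (needed because $f$ may collapse consecutive adjacent pixels to a single pixel, and equal pixels are not adjacent under the paper's column/row/diagonal adjacency) and your check that $f(E)$ is a bounded subimage with non-empty interior, so that Definition~\ref{def:knearconnected} even applies.
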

\begin{proof} 
	Let $E$ be a $\knear$-connected subregion in a digital topological space $X$ and $f$ be a $\kappa$-continuous function from $X$ to a digital topological space $Y$. For a pair of distinct pixels $p'$ and $q'$ in $f(E)$, there exist pixels $p,q$ in $E$ such that $f(p)=p'$ and $f(q)=q'$. Since $E$ is $\knear$-connected, there exist a finite sequence of pixels $p=p_0, p_1, \dotsc, p_{n-1}, p_n=q$ and each pair of pixels $p_i$ and $p_{i+1}$ are adjacent.In that case, the sequence $p'=f(p), f(p_1), \dotsc, f(p_{n-1}), f(p_n)=q'$ is also $\knear$-connected subset of $f(E)$. 
\end{proof}

Recall that the first two components of a voxel  $v=(x,y,t)$ in a video frame $fr_t: X \to \mathbb{R}$ show the location of $v$ in that frame and the last component $t$ represents the time parameter. Then we have the following forms of adjacencies for a pair of voxels from  distict video frames. \\
	
\begin{compactenum}[1$^o$]
  \item [{\bf Point-across-images adjacency}.]  Voxels $v=(x,y,t)$,  $\omega=(x,y,t')$ in a pair of separate video frames $fr_{t}$ and $fr_{t'}$ in the same location are point-across-adjacent in separate images.
  \item [{\bf Video voxel value adjacency}.]
	Voxels $v=(x,y,t)$,  $\omega=(x',y',t')$ in a pair of separate video frames $fr_{t}$ and $fr_{t'}$ are voxel value adjacent, provided $ft_{t}(v)=fr_{t+1}(\omega)$.\\
\end{compactenum}

We also have the following forms of adjacencies for video frames and their subimages.\\
	
\begin{compactenum}[1$^o$]
	\item [{\bf Video frame value adjacency}.]
	Video frames $fr_{t},fr_{t'}$ in which all picture elements have the same or one or more similar or identical signal values ({\em e.g.}, color, brightness level, gradient) are frame-value-adjacent.
	\item [{\bf Video frame subimage location-value adjacency}.] Let $A, B \in 2^X$ and $fr_{t}$ and $fr_{t'}$ two video frames on $X$. We say that the subimages   $fr_{t}A$ and $fr_{t'}B$  with voxels $v\in fr_{t}A$, $\omega \in fr_{t'}B$ are location-value-adjacent, provided  $fr_{t}A(v)=fr_{t'}B(\omega)$.  
\end{compactenum}

\begin{example}
In Figure~\ref{fig:kAdjacency}, we have
\begin{compactenum}[1$^o$]
\item $p_1,p_2$ in Figure~\ref{fig:pointwiseAdjacency} are column adjacent, since $p_1,p_2$ are in the same column in image $Img_1$.
\item $p_3,p_4$ in Figure~\ref{fig:pointwiseAdjacency} are row adjacent, since $p_3, p_4$ are in the same row in image $Img_2$.
\item $v_5\in Img_3$, $v_6\in Img_4$ in Figure~\ref{fig:pointwiseAdjacency} are point-across adjacent, since $v_5,v_6$ are in the same location in images $Img_3,Img_4$.
\item Subsets $A\subset fr_1, D\subset fr_4$ in Figure~\ref{fig:subimageAdjacency} are value-adjacent, since $fr_1 A(v) = fr_4 D(v) =$ green and also location-adjacent, since all voxels in $A,D$ are location-adjacent.  Similarly, subsets $B\subset fr_2, C\subset fr_3$ are value-adjacent, since $fr_2 B(v) = fr_3 C(v) =$ red and also location-adjacent, since all voxels in $B,C$ are location-adjacent.
\end{compactenum}
\end{example}

\begin{proposition}
Let subsets $A,B$ in video frames $fr,fr'$ be value-adjacent.  Then $A \ \dnear\ B$, i.e., $A$ is descriptively near $B$.
\end{proposition}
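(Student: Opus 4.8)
The plan is to read off the conclusion directly from the definition of descriptive nearness, once the relevant probe (feature) map has been identified. First I would recall the descriptive proximity $\dnear = \delta_{\Phi}$ from the proximity literature already cited in the introduction: fixing a probe map $\Phi$ that assigns to each voxel its description, the descriptive intersection of two subsets is
\[
A \dcap B = \left\{\, p \in A \cup B : \Phi(p) \in \Phi(A)\ \mbox{and}\ \Phi(p) \in \Phi(B) \,\right\},
\]
and $A \dnear B$ holds precisely when $A \dcap B \neq \emptyset$, equivalently when $\Phi(A) \cap \Phi(B) \neq \emptyset$. This mirrors the discrete (spatial) proximity $\near_d$ of Definition~\ref{def:discreteProximity}, with overlap of point sets replaced by overlap of descriptions.

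The key identification I would make is that the governing probe is the voxel value itself: for a voxel $v$ in a frame $fr$, set $\Phi(v) = fr(v) \in \mathbb{R}$, so that a voxel's description is exactly its lumen/signal value. With this choice the hypothesis does all the work. By the definition of video frame subimage value-adjacency, $A$ and $B$ being value-adjacent means there are voxels $v \in A$ and $\omega \in B$ with $fr(v) = fr'(\omega)$ (in Figure~\ref{fig:kAdjacency} every voxel of the matched subimages carries the common value, but a single coincident pair already suffices). Passing through $\Phi$ gives $\Phi(v) = fr(v) = fr'(\omega) = \Phi(\omega)$, so this common value lies in $\Phi(A) \cap \Phi(B)$; hence $v \in A \dcap B$, the descriptive intersection is nonempty, and therefore $A \dnear B$.

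The argument carries no analytic content, so the main obstacle is purely definitional bookkeeping: one must make explicit that the feature $\Phi$ indexing $\delta_{\Phi}$ is the frame-value assignment, so that the equality $fr(v) = fr'(\omega)$ supplied by value-adjacency is literally the equality of descriptions that descriptive nearness requires. Once that identification and the standard equivalence $A \dnear B \iff A \dcap B \neq \emptyset$ are recorded, the conclusion is immediate, and notably it uses nothing about the locations of $v$ and $\omega$: value-adjacency alone forces descriptive nearness, which is exactly what distinguishes this descriptive result from its spatial counterpart.
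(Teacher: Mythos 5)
The paper states this proposition with no proof at all (it is immediately followed by a figure and the next section), so there is no authorial argument to compare yours against; your proposal fills a genuine gap in the text. Your argument is correct and is surely what the authors intended: the only nontrivial step is the definitional bookkeeping you identify, namely fixing the probe $\Phi$ to be the voxel-value (lumen) assignment $\Phi(v)=fr(v)$, after which value-adjacency of $A$ and $B$ hands you voxels $v\in A$, $\omega\in B$ with $\Phi(v)=fr(v)=fr'(\omega)=\Phi(\omega)$, so $\Phi(A)\cap\Phi(B)\neq\emptyset$ and $A\ \dnear\ B$. Two small remarks. First, the paper itself never fixes $\Phi$ at this point; the only place it instantiates $\dnear$ is Remark~\ref{remark:temporalProximity}, where $\Phi$ is the lifespan feature on subsets rather than a pointwise value probe, so your choice of $\Phi$ is an interpretive step you should flag as such (it is the natural one, consistent with the cited descriptive-proximity literature). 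Second, the paper's definition of subimage location-value adjacency can be read universally (all corresponding voxels share values) rather than existentially; under that reading you should invoke Axiom~\ref{axiom:nonemptySubimage} to guarantee at least one pair $v,\omega$ exists, after which your argument goes through unchanged.
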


\begin{figure}
	\centering 
	\scalebox{0.85}{\includegraphics{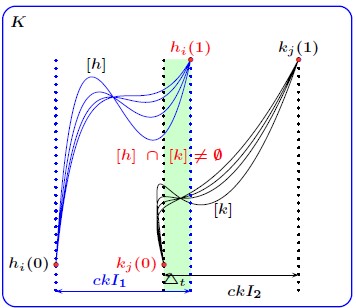}}
	\caption{Spatially far homotopy classes $[h],[k]$ such that $[h]\cap [k]=\emptyset$, temporally close with $h_i(1)\in [h], k_j(0)\in [k]$ such that $[h]\ \tnear\ [k]$.}
	\label{fig:overlappingClassClockTimes}
\end{figure}

\section{Temporal Discrete Proximity}
Temporally discrete near subsets in a video are subsets that share a common region along their overlapping clocktimes. Let $\{fr_t\}_{t\in [t_0,t_1]}$ be a collection of vide frames on a digital image $X$. Given a foreground subimage $A \subset X$, let $A_t$ denotes the position of that image at time $t$. Therefore, $A_{t_0}$ represents the initial position  and $A_{t_1}$ represents the terminal position of $A$.

\begin{definition}\label{def:timeDiscreteProximity} {\rm {\bf [Temporal Discrete Proximity]}} \\
	Given a sequence of video frames $\{fr_t\}_{t_\in [t_0,t_1]}$ and two subsets $A, B \subset X$, the temporal discrete proximity, $\tnear$,  on $X$ is defined by
	\[
	A\ \tnear \ B  \ :\Leftrightarrow \  \left\{t \in [t_0,t_1] \ : \ A_t \cap B_t \neq \emptyset \right\} \neq \emptyset
	\]
	We say that two subimages $A$ and $B$ are {\bf temporally near}, provided $A\tnear B$. 
\end{definition}

Given a digital image $(X, Img)$, subimages  $A, B \subset Img X$, and a non-negative real number $\varepsilon \geq 0$,  we define the $\varepsilon$-nearness by
\[ A \ \mbox{and} \ B \ \mbox{are} \  \varepsilon\mbox{-near} \ \Leftrightarrow \ D(A, B)\leq \varepsilon
\]

Let $A,B$ be frames that occur during a temporal interval $\bigtriangleup t$.
Notice that if  $A \tnear B$, then $A$ and $B$ are $0$-near within the time period where $A_t$ and $B_t$ are overlapping.

\begin{definition}\label{def:timeMetricProximity} {\rm {\bf [Temporal Metric Proximity]}} \\
	Given a sequence of video frames $\{fr_t\}_{t_\in [t_0,t_1]}$ and two subimages $A, B \subset X$, the temporal metric proximity, $\mnear$,  on $X$ is defined by
	\[
	A\ \mnear \ B \ \Leftrightarrow \  \left\{t\in [t_0,t_1] \ : \ A_t \ \mbox{and} \ B_t \ \mbox{are} \ \varepsilon\mbox{-near} \right\} \neq \emptyset
	\]
	We say that $A$ and $B$ are {\bf  temporally $\varepsilon$-near}, provided $A \mnear B$.
\end{definition}

\begin{example}
	Recall that homotopy classes $[h],[k]$ in a space $K$ are collection of paths $h_i\in [h], h_j\in [k]$ as shown in Fig.~\ref{fig:overlappingClassClockTimes}, with
	\begin{align*}
		[n] &= \ \mbox{mod}\ n,\\
		h_0(0) & = h_i(0)\ \mbox{for all}\ i=0,...,n-1[n], n\geq 1,\\
		h_1(0) & = h_i(1)\ \mbox{for all}\ i=0,...,n-1[n], n\geq 1.\\
		k_0(0) & = k_i(0)\ \mbox{for all}\ j=0,...,n-1[n], n\geq 1,\\
		k_1(0) & = k_i(1)\ \mbox{for all}\ j=0,...,n-1[n], n\geq 1.\\
		h_1(0) &\ \tnear\ k_1(0).\\
		h_1(1) &\ \tnear\ k_1(1).
	\end{align*}
	In this example, $[h]\ \mnear \ [k]$, since $h_1\ \tnear\ k_1$.
\end{example}

\begin{definition}\label{def:temporalDigitalTopology}{\rm {\bf [Temporal Digital Topology]}}\\
	A {\bf temporal digital topology} is a digital topology containing time-constrained digital images (or voxels).
\end{definition}

\begin{definition}\label{def:temporalProxSpace}{\rm {\bf [Temporal Proximity Space]}}\\
	Let $X$ be a sequence of video frames.  A {\bf temporal proximity space} $X$ (denoted by $\left(X,\mnear\right)	$ is a collection of sub-sequences of $A,B\in 2^X$ that occur in overlapping temporal intervals such that $A\ \mnear\ B$.
\end{definition}

\begin{remark}
	Let $A,B\in 2^X$ in temporal proximity space $\left(X,\mnear\right)$ with homotopic mappings $t_A,t_B$ such that 
	\begin{align*}
	t_A, t_B:X\times I &\to 2^{\mathbb{R}},\ \mbox{defined by}\\
	t_A(E_i) & = \left\{E_i(x)\in \mathbb{R}: E_i\in 2^A, x\in E_i\right\},\\
	t_B(H_i) & = \left\{H_i(x)\in \mathbb{R}: H_i\in 2^B, x\in H_i\right\},\\
	t_A(E_i)\ \mnear\ t_B(H_i) &\ \mbox{provided}\ t_A(E_i)\cap t_B(H_i)\neq\emptyset.
	\end{align*}
\end{remark}

\begin{lemma}\label{lemma:temporalProxSpace}{\rm {\bf [Temporal Proximity]}}\\
	Let $\left(X,\mnear\right)$ be a temporal proximity space, $A,B \subseteq X$.
	$A\ \mnear\ B$ if and only if $A\ \tnear\ B$.
\end{lemma}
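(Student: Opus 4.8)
The plan is to prove the two implications separately, using that $\mnear$ and $\tnear$ are both existential statements quantified over the same temporal interval $[t_0,t_1]$: by Definition~\ref{def:timeDiscreteProximity}, $A\ \tnear\ B$ asserts that $A_t\cap B_t\neq\emptyset$ for some $t$, while by Definition~\ref{def:timeMetricProximity}, $A\ \mnear\ B$ asserts that $A_t$ and $B_t$ are $\varepsilon$-near, i.e. $D(A_t,B_t)\leq\varepsilon$, for some $t$. It therefore suffices to compare the two per-time conditions and then transport the comparison through the existential quantifier. In line with the remark preceding Definition~\ref{def:timeMetricProximity}, I read $\mnear$ at the critical threshold $\varepsilon=0$, so that $\varepsilon$-nearness at time $t$ means precisely $D(A_t,B_t)=0$, and I take $D$ to be the usual gap functional $D(U,V)=\inf\{d(p,q):p\in U,\ q\in V\}$.

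First I would handle $A\ \tnear\ B\Rightarrow A\ \mnear\ B$, which in fact holds for every $\varepsilon\geq 0$. If $A\ \tnear\ B$, there is a time $t^\ast\in[t_0,t_1]$ and a voxel $p\in A_{t^\ast}\cap B_{t^\ast}$. Using $p$ as a common point gives $D(A_{t^\ast},B_{t^\ast})=0\leq\varepsilon$, so $A_{t^\ast}$ and $B_{t^\ast}$ are $\varepsilon$-near; hence the defining set of Definition~\ref{def:timeMetricProximity} is nonempty and $A\ \mnear\ B$.

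For the converse $A\ \mnear\ B\Rightarrow A\ \tnear\ B$ I would exploit the discreteness of the voxel grid. Suppose $A\ \mnear\ B$; then some time $t^\ast$ satisfies $D(A_{t^\ast},B_{t^\ast})\leq\varepsilon=0$, hence $D(A_{t^\ast},B_{t^\ast})=0$. If, toward a contradiction, $A_{t^\ast}\cap B_{t^\ast}=\emptyset$, then every $p\in A_{t^\ast}$ and $q\in B_{t^\ast}$ are distinct points of the fixed voxel/subvoxel grid, which is a uniformly discrete sublattice of $\mathbb{Q}\times\mathbb{Q}$ (integer spacing for voxels, half-integer for subvoxels); hence $d(p,q)\geq\eta$ for a fixed minimum separation $\eta>0$. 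Taking the infimum over all such pairs yields $D(A_{t^\ast},B_{t^\ast})\geq\eta>0$, contradicting $D(A_{t^\ast},B_{t^\ast})=0$. Therefore $A_{t^\ast}\cap B_{t^\ast}\neq\emptyset$ and $A\ \tnear\ B$.

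The hard part is this last step, the implication $D(A_t,B_t)=0\Rightarrow A_t\cap B_t\neq\emptyset$: it fails for arbitrary subsets of a metric space, since a gap functional can vanish on disjoint sets whose closures merely touch, so the argument must genuinely rest on the digital structure — the strictly positive minimum separation $\eta$ of distinct picture-element locations on the lattice. I would also flag that the equivalence is sensitive to $\varepsilon$: the forward implication is robust for all $\varepsilon\geq 0$, whereas the converse needs $\varepsilon=0$ (or any $\varepsilon<\eta$), which is exactly the $0$-nearness reading signalled before Definition~\ref{def:timeMetricProximity} and by the intersection-based description of $\mnear$ given in the remark following Definition~\ref{def:temporalProxSpace}.
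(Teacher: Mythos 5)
The paper states Lemma~\ref{lemma:temporalProxSpace} with no proof at all (the next item in the source is Remark~\ref{remark:temporalProximity}), so there is no authorial argument to compare yours against; what can be judged is whether your argument is sound relative to the paper's definitions. Your forward implication is fine: if $A\ \tnear\ B$, then some $t^\ast$ has $A_{t^\ast}\cap B_{t^\ast}\neq\emptyset$, so $D(A_{t^\ast},B_{t^\ast})=0\leq\varepsilon$ and hence $A\ \mnear\ B$ for every $\varepsilon\geq 0$; this is exactly the observation the paper itself records just before Definition~\ref{def:timeMetricProximity}.

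The converse, however, contains a genuine gap, located precisely at the step you flag as the hard part. Your claim that $D(A_{t^\ast},B_{t^\ast})=0$ forces $A_{t^\ast}\cap B_{t^\ast}\neq\emptyset$ rests on the picture elements forming a uniformly discrete set with a positive minimum separation $\eta$ (``integer spacing for voxels, half-integer for subvoxels''). The paper's definitions do not support this: by Axiom~\ref{axiom:pictureElement} a sub-pixel or sub-voxel may sit at an \emph{arbitrary} rational location $(r,c)\in\mathbb{Q}\times\mathbb{Q}$, and by the definition of a temporal digital topological space a frame $fr E$ consists of \emph{all} rational points in a closed box about its center voxel. These sets are dense, not uniformly discrete, so no such $\eta>0$ exists, and the implication $D=0\Rightarrow$ nonempty intersection can genuinely fail (split the rational points of a segment about an irrational threshold: the two halves are disjoint with gap $0$). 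To repair the step you would need either an explicit finiteness hypothesis on the voxel sets (for finite sets the infimum defining $D$ is attained, so $D=0$ does yield a common point) or a restriction of picture elements to the integer lattice; the paper provides neither. A second, related caveat: your proof requires $\varepsilon=0$, whereas the lemma is stated for the $\varepsilon$-parameterized relation $\mnear$ with no restriction on $\varepsilon$, and the paper's own Remark~\ref{remark:temporalProximity}, immediately after the lemma, asserts that $A\ \mnear\ B$ can hold even when $A_t\cap B_t=\emptyset$ --- which contradicts the ``only if'' direction whenever $\varepsilon>0$. So what you have established (modulo the discreteness/finiteness repair) is a corrected, restricted version of the lemma, not the statement in the generality given; flagging that restriction explicitly, as you began to do, is the right instinct, but the separation constant $\eta$ you invoke does not exist in the paper's setting.
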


\begin{remark}\label{remark:temporalProximity}
	Images that occur at same time usually do not have any points in common. Discrete temporal closeness of sets in a space $X$ is a form of descriptive closeness.  That is, if we introduce a feature $\Phi:2^X\to \mathbb{R}^2$ defined by $\Phi(A\subset X) = (t_{A_0},t_{A_1})$ (the life span of $A$).  Then,  
	\begin{align*}
		A\ &\dnear\  B,\ \mbox{provided}  \ 
		\Phi(A) \cap \Phi(B) \neq \emptyset \ \mbox{and}\\			A\ &\dnear\  B \ \mbox{and} \ A_t \cap   B_t \neq \emptyset \ \mbox{iff} \ A\tnear B.
	\end{align*}
	On the other hand, we introduce $\mnear$ (temporal metric proximity) as a convenient way of pigeonholing those sets that have overlapping lifespans.  $A\ \mnear\ B$ holds even if $A_t \cap B_t = \emptyset$.
\end{remark}

\begin{lemma}\label{lemma:digitalTemporalProximity}
	Digital images captured at the same time are elapsed-time temporal metric proximal.
\end{lemma}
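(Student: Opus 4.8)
The plan is to exploit the fact that temporal metric proximity (Definition~\ref{def:timeMetricProximity}) certifies nearness through the threshold distance $D$ rather than through a genuine overlap of the pixel sets. Concretely, I would show that two images $A$ and $B$ captured at one common elapsed time $t^{*}$ have elapsed-time separation zero, and that this alone forces $A \mnear B$ for every admissible threshold $\varepsilon \geq 0$.

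First I would fix the common capture instant $t^{*}$ and record that $t^{*}$ lies in the lifespan of each image; in the notation of Remark~\ref{remark:temporalProximity} this reads $t^{*} \in \Phi(A) \cap \Phi(B)$, so the two lifespans overlap at $t^{*}$. Next I would observe that, measured in elapsed time, the two frames occupy the identical instant, so their elapsed-time distance is $D(A,B) = |t^{*} - t^{*}| = 0$. Since $0 \leq \varepsilon$ for every $\varepsilon \geq 0$, the definition of $\varepsilon$-nearness immediately yields that $A_{t^{*}}$ and $B_{t^{*}}$ are $\varepsilon$-near.

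With $\varepsilon$-nearness secured at $t^{*}$, I would feed $t^{*}$ into Definition~\ref{def:timeMetricProximity}: the defining set, namely those $t \in [t_0,t_1]$ at which $A_t$ and $B_t$ are $\varepsilon$-near, contains $t^{*}$ and is therefore nonempty, which is exactly the assertion $A \mnear B$. For coherence with the surrounding theory I would point to Lemma~\ref{lemma:temporalProxSpace}, which renders this equivalent to $A \tnear B$, and to the principle in Remark~\ref{remark:temporalProximity} that $\mnear$ is designed to record overlapping lifespans even when $A_t \cap B_t = \emptyset$.

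The main obstacle I anticipate is interpretive rather than computational: one must read the threshold distance in Definition~\ref{def:timeMetricProximity} as an elapsed-time separation, as the phrase \emph{elapsed-time\dots metric proximal} in the statement signals, and not as a spatial distance between pixel supports. Were the distance spatial, two same-time frames with disjoint supports could fail $\varepsilon$-nearness for small $\varepsilon$, and the warning in Remark~\ref{remark:temporalProximity} that same-time images typically share no points would obstruct the argument. Pinning down that the metric is temporal is therefore the crux; once that reading is in place, the zero-distance computation closes the proof at once.
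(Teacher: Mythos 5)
Your proposal takes a genuinely different route from the paper, but the step it rests on is not sound. The crux, as you yourself flag, is reading the distance $D$ in the definition of $\varepsilon$-nearness as an elapsed-time separation, so that two images captured at a common instant $t^{*}$ satisfy $D(A,B)=\abs{t^{*}-t^{*}}=0$. The paper's own usage rules this reading out. Immediately after introducing $\varepsilon$-nearness, the paper notes that if $A\ \tnear\ B$ then $A$ and $B$ are $0$-near \emph{within the time period where $A_t$ and $B_t$ are overlapping}; and in item 4$^o$ of the proof of the proposition on digital proximity spaces, $0$-nearness is again inferred from $A_t\cap B_t\neq\emptyset$. In both places $D$ is a spatial distance between the time-$t$ positions $A_t$ and $B_t$, which vanishes because the sets intersect --- it is not a distance between capture times. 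Worse, under your temporal reading the relation $\mnear$ degenerates: for any two subimages whatsoever and any time $t$, the positions $A_t$ and $B_t$ occupy the same instant, so their ``elapsed-time distance'' would be $0$ and every pair of subimages would be $\mnear$. That would make $\mnear$ the universal relation, contradicting Lemma~\ref{lemma:temporalProxSpace}, which asserts $A\ \mnear\ B$ if and only if $A\ \tnear\ B$, since $\tnear$ (spatial overlap at some common time) is certainly not universal. So with the paper's intended $D$, your argument collapses exactly in the way you anticipate: two same-time images with disjoint, far-apart supports need not be $\varepsilon$-near for small $\varepsilon$, and nothing in your proof excludes that case.

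The paper argues differently: it invokes Lemma~\ref{lemma:nonemptyImage} to conclude that both images are nonempty and then applies the proximity definition directly, the implicit mechanism being that two digital images captured at the same time live on the same underlying space, so their nonempty realizations at the common capture time share points, are hence $0$-near, hence $\varepsilon$-near for every $\varepsilon\geq 0$. (The paper's proof is itself terse --- it cites Definition~\ref{def:timeDiscreteProximity} while concluding $\mnear$, and the passage from nonemptiness to intersection is left implicit --- but its logical skeleton is ``nonempty $+$ common capture time $\Rightarrow$ overlap $\Rightarrow$ $0$-near,'' which is precisely the spatial mechanism your proof avoids.) To repair your argument, replace the elapsed-time computation by this overlap step: exhibit the common time $t^{*}$, argue $A_{t^{*}}\cap B_{t^{*}}\neq\emptyset$ (this is where Lemma~\ref{lemma:nonemptyImage} and the shared domain are needed), conclude $D(A_{t^{*}},B_{t^{*}})=0\leq\varepsilon$, and then finish as you do by noting that the defining set in Definition~\ref{def:timeMetricProximity} contains $t^{*}$ and is therefore nonempty.
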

\begin{proof}
	Let $Img$ and $Img'$ be digital images captured at the same time.
	From Lemma~\ref{lemma:nonemptyImage}, $Img,Img'$ are nonempty.
	Hence, from Definition~\ref{def:timeDiscreteProximity}, $Img\ \mnear\ Img'$.
\end{proof}

\begin{proposition}
	Every pair of subregions in a video frame are elapsed-time temporal metric proximal.  
\end{proposition}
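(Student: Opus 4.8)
The plan is to reduce the statement to Lemma~\ref{lemma:digitalTemporalProximity}, exploiting the fact that a single video frame $fr_t$ carries exactly one elapsed-time coordinate $t$. The governing observation is that, although two subregions of a frame may be spatially far apart, they are temporally coincident: both are present precisely at the frame's time $t$, so their lifespans overlap and the temporal aspect of $\mnear$ is automatically met.

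First I would record that any subregion $A$ of a video frame $fr_t$ is a subimage in the sense of Axiom~\ref{axiom:nonemptySubimage}, hence a nonempty digital image by the argument of Lemma~\ref{lemma:nonemptyImage}; the same applies to a second subregion $B$. Next I would observe that, since $A$ and $B$ are both cut out of the one frame $fr_t$, they are digital images captured at the same elapsed time $t$. Finally I would invoke Lemma~\ref{lemma:digitalTemporalProximity}, which states exactly that digital images captured at the same time are elapsed-time temporal metric proximal, to conclude $A\ \mnear\ B$.

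An equivalent direct route works from Definition~\ref{def:timeMetricProximity}: one checks that the time-set $\{t\in[t_0,t_1] : A_t \text{ and } B_t \text{ are } \varepsilon\text{-near}\}$ is nonempty because it contains the frame's own time $t$, at which both $A_t = A$ and $B_t = B$ are present. The one delicate point, and the step I expect to be the main obstacle, is the metric clause hidden in $\varepsilon$-nearness, namely verifying $D(A,B)\le \varepsilon$ at that instant. For arbitrary subregions and a prescribed small $\varepsilon$ this need not hold on spatial grounds alone, so the real content is that temporal metric proximity asks only for nearness at \emph{some} instant of the common lifespan rather than for spatial closeness. Once the two subregions are seen to share the single instant $t$, the convention recorded in Lemma~\ref{lemma:digitalTemporalProximity} (same time implies metric proximal) supplies the remaining metric clause. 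I would therefore foreground the reduction to Lemma~\ref{lemma:digitalTemporalProximity} and treat the direct computation from Definition~\ref{def:timeMetricProximity} only as a sanity check.
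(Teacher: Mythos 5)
Your proposal is correct and takes essentially the same route as the paper: both reduce the claim to Lemma~\ref{lemma:digitalTemporalProximity} by observing that the two subregions are (time-constrained) digital images occurring at the same elapsed time, the paper citing Axiom~\ref{axiom:frame} for this and you citing Axiom~\ref{axiom:nonemptySubimage} together with Lemma~\ref{lemma:nonemptyImage}. If anything, your write-up is more careful than the paper's (which loosely treats the subregions $frA, frB$ as whole frames), and your flagged concern about the unverified metric clause $D(A,B)\le\varepsilon$ is a genuine looseness in the paper's own chain of lemmas, not a defect of your argument.
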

\begin{proof}
	Let $frA,frB$ be a pair of video frames.  From Axiom~\ref{axiom:frame}, every video frame is a time-constrained digital image.  Hence, from Lemma~\ref{lemma:digitalTemporalProximity}, $frA\ \mnear\ frB$.
\end{proof}

\begin{proposition}
	Let $A, B$ be subimages in a digital topology space $(X,\{fr_t\})$ with $\Phi(A) = \Phi(B) = (t_0,t_1)$ (common lifespan).  If $A\ \tnear\ B$, then $A\ \mnear\ B$.  
\end{proposition}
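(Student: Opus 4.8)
The plan is to unwind the two temporal proximity definitions at a single witnessing time and to exploit the fact that any admissible set-distance $D$ between two sets that meet must vanish. First I would invoke the hypothesis $A\ \tnear\ B$. By Definition~\ref{def:timeDiscreteProximity}, the set $\{t\in[t_0,t_1] : A_t\cap B_t\neq\emptyset\}$ is nonempty, so I may fix a witnessing instant $t^{*}\in[t_0,t_1]$ with $A_{t^{*}}\cap B_{t^{*}}\neq\emptyset$. The common-lifespan assumption $\Phi(A)=\Phi(B)=(t_0,t_1)$ guarantees that both positions $A_{t^{*}}$ and $B_{t^{*}}$ are genuinely defined at this instant, so the comparison at $t^{*}$ is legitimate and $t^{*}$ lies in the shared parameter interval used by both proximities.

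Next I would select a shared point $p\in A_{t^{*}}\cap B_{t^{*}}$ and observe that, since $p$ lies in both sets, the set-distance satisfies $D(A_{t^{*}},B_{t^{*}})=0$. Because $\varepsilon\geq 0$ by hypothesis, this yields $D(A_{t^{*}},B_{t^{*}})=0\leq\varepsilon$, which is precisely the condition for $A_{t^{*}}$ and $B_{t^{*}}$ to be $\varepsilon$-near. This is exactly the reasoning already flagged in the remark following Definition~\ref{def:timeMetricProximity}, namely that overlap at a time forces $0$-nearness at that time.

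Finally I would repackage this into the definition of $\mnear$. The instant $t^{*}$ now belongs to $\{t\in[t_0,t_1] : A_t\text{ and }B_t\text{ are }\varepsilon\text{-near}\}$, so this set is nonempty, and by Definition~\ref{def:timeMetricProximity} I conclude $A\ \mnear\ B$. In effect this establishes the easy direction of the biconditional recorded in Lemma~\ref{lemma:temporalProxSpace}, specialized to the common-lifespan setting.

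The only genuine subtlety, and the step I would treat most carefully, is the claim $D(A_{t^{*}},B_{t^{*}})=0$. This depends on reading $D$ as an infimum-type (gap) distance, for which a common point forces the value $0$; a Hausdorff-type distance would not in general vanish merely because two sets intersect. Since the preceding remark explicitly identifies overlap with $0$-nearness, I would state this interpretation of $D$ at the outset, after which the single-point witness argument goes through cleanly and the rest is routine unwinding of definitions.
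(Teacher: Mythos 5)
Your proof is correct and takes essentially the same route as the paper: the paper's proof is simply ``Immediate from Remark~\ref{remark:temporalProximity},'' and the substance behind that remark --- a witnessing time with $A_{t^*}\cap B_{t^*}\neq\emptyset$ forces $0$-nearness, hence $\varepsilon$-nearness for any $\varepsilon\geq 0$, hence $A\ \mnear\ B$ --- is exactly what you spell out (the paper itself makes this argument explicit in item 4$^o$ of its earlier proposition on digital proximity spaces). Your closing caveat that $D$ must be read as a gap-type distance rather than a Hausdorff-type distance is well taken and agrees with the paper's own ``Notice'' preceding Definition~\ref{def:timeMetricProximity}.
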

\begin{proof}
	Immediate from Remark~\ref{remark:temporalProximity}.
\end{proof}

\begin{definition}{\rm {{\bf [Temporal Adjacency]}}}\\		
	Let $A,B$ be subsets in a digital topology space $(X, \{fr_t\})$ with $\Phi(A) = \Phi(B) = (t_0,t_1)$ (common lifespan). If there exists a time period $T \subseteq [t_0,t_1]$ such that  $A_t$ and $B_t$ are adjacent for all $t\in T$, then we say that $A$ and $B$ {\bf are temporally adjacent}.
\end{definition}

\begin{proposition}
	In a digital proximity space $(X,\near)$ containing digital images $Img: X \to \mathbb{R}$,
	\begin{compactenum}[1$^o$]
		\item Every picture element in a digital image has rational coordinates.
		\item Overlapping subimages $A, B$ are discretely proximal. 
		\item Overlapping subimages $A, B$ are  adjacent.
		\item Temporally near subimages are temporally $\varepsilon$-near. 
		\item Video frames have elapsed-time sub-voxel picture elements.
	\end{compactenum}
\end{proposition}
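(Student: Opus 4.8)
The plan is to establish the five clauses independently, since each is a direct consequence of an axiom, definition, or lemma already in place; the proposition consolidates the framework rather than deducing a single new fact. First I would dispatch (1$^o$) by invoking Axiom~\ref{axiom:digitalTopologySpace}: a digital topology space is a Hausdorff space $X\subset\q\times\q$, so every picture element $p=(x,y)\in X$ satisfies $x,y\in\q$, which is precisely the assertion of rational coordinates.

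For (2$^o$) I would unwind the word ``overlapping'': two subimages $A,B$ overlap precisely when they share points, that is $Img A\cap Img B\neq\emptyset$. By Definition~\ref{def:discreteProximity} this is exactly the defining condition for $A\ \near_d\ B$, so overlapping subimages are discretely proximal. Claim (3$^o$) then follows by chaining (2$^o$) with the remark recorded after Definition~\ref{def:adjacent} that discrete proximity implies adjacency: from $A\ \near_d\ B$ we obtain $A\ \knear\ B$, hence overlapping subimages are adjacent.

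Claim (4$^o$) is immediate from Lemma~\ref{lemma:temporalProxSpace}, which supplies the equivalence $A\ \mnear\ B$ if and only if $A\ \tnear\ B$; in particular temporal nearness ($\tnear$) yields temporal $\varepsilon$-nearness ($\mnear$). Finally, for (5$^o$) I would appeal to Axiom~\ref{axiom:pictureElement} together with the ambient TDT-space inclusion $X\subset\q\times\q\times\mathbb{R}$: between voxels at integer locations $(x,y,t)\in\mathbb{Z}\times\mathbb{Z}\times\mathbb{R}$ there sit sub-voxel picture elements at locations $(r,c,t)\in\q\times\q\times\mathbb{R}$ carrying the same elapsed-time coordinate $t$, and since a frame is a subset of such an $X$ it contains these elapsed-time sub-voxels.

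I do not expect a genuine obstacle, as the statement is a bookkeeping corollary and the only care needed is semantic rather than technical. Specifically, I must fix the intended reading of ``overlapping'' as ``sharing points'' so that the discrete-proximity hypothesis of Definition~\ref{def:discreteProximity} is met, and I must be explicit that the sub-voxels of (5$^o$) are exactly the non-integer rational points furnished by the ambient inclusion $X\subset\q\times\q\times\mathbb{R}$ rather than newly constructed objects. Keeping each clause matched to its correct source---Axiom~\ref{axiom:digitalTopologySpace}, Definition~\ref{def:discreteProximity}, the adjacency remark after Definition~\ref{def:adjacent}, Lemma~\ref{lemma:temporalProxSpace}, and Axiom~\ref{axiom:pictureElement}---is the whole content of the argument.
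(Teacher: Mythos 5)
Your proposal is correct and follows essentially the same clause-by-clause route as the paper's proof: Axiom~\ref{axiom:digitalTopologySpace} for (1$^o$), Definition~\ref{def:discreteProximity} for (2$^o$), Definition~\ref{def:adjacent} (via the remark that $\near_d$ implies $\knear$) for (3$^o$), and Axiom~\ref{axiom:pictureElement} together with the TDT inclusion from Axiom~\ref{axiom:frame} for (5$^o$). The only cosmetic difference is in (4$^o$), where you cite Lemma~\ref{lemma:temporalProxSpace} while the paper argues directly that on the time interval where $A_t\cap B_t\neq\emptyset$ the sets $A_t,B_t$ are $0$-near and hence temporally $\varepsilon$-near; since the direction of the lemma you invoke is exactly that observation, the content is identical.
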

\begin{proof}
	Let $A, B$ be subimages in a digital image $Img$ in a topological space $X$.  Then we have\\
	1$^o$: Immediate from Axiom~\ref{axiom:digitalTopologySpace}.\\
	2$^o$: By Axiom~\ref{axiom:nonemptySubimage}, subimages $A, B$ in a digital image $Img$ are nonempty. If $A\ \cap\ B\neq \emptyset$, then, from Definition~\ref{def:discreteProximity}, $A,B$ are discretely proximal.\\
	3$^o$: If $A\ \near_d \ B$, then, from Definition~\ref{def:adjacent}, $Img A, Img B$ are adjacent.\\
	4$^o$: From Axiom~\ref{axiom:frame}, video subframes $fr_t A$ and $fr_{t}B$ are digital images. Within the time period $[t_0,t_1]$ where $A_t \cap B_t \neq \emptyset$, we immediately have that $A_t$ and $B_t$ are $0$-near. Hence $A$ and $B$ are temporal metric proximal.\\
	5$^o$: Within the time period $[t_0,t_1]$ where $A_t \cap B_t \neq \emptyset$, we immediately have that $A_t$ and $B_t$ are adjacent since they have common voxels.\\
	6$^o$: From 4$^o$, a video frame $fr$ is a digital image.  From Axiom~\ref{axiom:pictureElement}, $fr$ has picture elements that are sub-voxels. From Axiom~\ref{axiom:frame}, the sub-voxels in $fr$ are time-constrained.
\end{proof}


\begin{definition}{\rm {\bf [Video-frame Connectedness]}} \\
	Given a video $\{fr_{t}\}_{[t_0,t_1]}$ on a digital image $X$. We say that a connected subimage $E$ of $X$ is {\bf video-frame connected}, provided each $E_t$ is connected for all $t\in [t_0,t_1]$ and $E_t$ and $E_{t+1}$ are adjacent for all $t\in [t_0,t_1-1]$.
\end{definition}
	
\begin{proposition}
	The image of a (temporally) video-frame connected subregion $E \subset X$ under a $\kappa$-continuous function $f: X\to X$ is also video-frame connected. 
\end{proposition}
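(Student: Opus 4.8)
The plan is to verify the two defining conditions of video-frame connectedness for $f(E)$ directly, feeding the hypotheses on $E$ into the two structural facts about $\kappa$-continuous maps already established. Throughout I would identify the time-$t$ slice of the image $f(E)$ with the image of the time-$t$ slice of $E$, i.e. $f(E)_t = f(E_t)$; making this identification precise is the one point that genuinely needs care, and I return to it at the end.

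For the first condition, recall that since $E$ is video-frame connected, each slice $E_t$ is connected in the sense of Definition~\ref{def:knearconnected}, i.e. $\knear$-connected, for every $t\in[t_0,t_1]$. Because $f$ is $\kappa$-continuous, the earlier Proposition asserting that $\knear$-connected regions are preserved under $\kappa$-continuous digital functions applies slicewise and gives that $f(E_t)=f(E)_t$ is $\knear$-connected for each $t$. For the second condition, the hypothesis on $E$ also gives $E_t \knear E_{t+1}$ for every $t\in[t_0,t_1-1]$; applying the definition of $\kappa$-continuity (Definition~\ref{def:kappacont}) to the adjacent subimages $E_t$ and $E_{t+1}$ yields $f(E_t)\knear f(E_{t+1})$, that is, $f(E)_t$ and $f(E)_{t+1}$ are adjacent for all $t\in[t_0,t_1-1]$. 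Both clauses of the definition then hold, so $f(E)$ is video-frame connected.

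The main obstacle I anticipate lies not in either of these steps, each of which is a one-line invocation of a prior result, but in justifying the identification $f(E)_t=f(E_t)$ and in confirming that $f(E)$ is still a bounded subregion with nonempty interior so that the definitions even apply. Since $X\subset\mathbb{Q}\times\mathbb{Q}\times\mathbb{R}$ carries an elapsed-time coordinate, one must check that $f$ respects this temporal stratification, equivalently that $f$ carries the time-$t$ frame slice of $E$ into the time-$t$ frame slice of $f(E)$; otherwise $f(E)_t$ need not coincide with $f(E_t)$ and the slicewise preservation argument would fail to transfer. Once this time-compatibility of $f$ is granted, or imposed as part of what it means for a $\kappa$-continuous map to act on video frames, the proof reduces to the two short applications above.
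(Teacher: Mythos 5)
Your proposal is correct and takes essentially the same route as the paper, whose entire proof is the single line ``It follows from Definition~\ref{def:kappacont}'': you have simply filled in the details that line leaves implicit, namely slicewise preservation of $\knear$-connectedness (via the earlier proposition on $\kappa$-continuous maps) and adjacency of consecutive slices $f(E_t)\ \knear\ f(E_{t+1})$ directly from the definition. Your flagged concern about the identification $f(E)_t=f(E_t)$ is a genuine subtlety that the paper silently assumes rather than addresses, so raising it is a refinement of, not a departure from, the paper's argument.
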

\begin{proof}
  It follows from Definition~\ref{def:kappacont}.
\end{proof}

\begin{definition}{\rm {\bf [Video-frame temporal Connectedness]}} \\
	Given a video $\{fr_{t}\}_{[t_0,t_1]}$ on a digital image $X$. We say that a connected subimage $E$ of $X$ is {\bf temporally video-frame connected}, provided there exists $t' \in [t_0,t_1]$ such that $E$ is video-frame connected on the time interval $[t_0,t']$ and $E_t$ dispappears for  $t'<t\leq t_1$.
\end{definition}

\begin{definition} {\rm {\bf [Temporal continuity]}}\\
	Given two videos $\{fr_t\}_{[t_0,t_1]}$ and $\{fr'_t\}_{[t_0,t_1]}$ on digital images $X$ and $Y$, respectively.  We say that a function $f: X \to Y$ is {\bf temporally continuous}, provided given subimages $A, B$ in $X$, $A$ and $B$ are temporally adjacent implies  $f(A)$ and $f(B)$ are also temporally adjacent.
\end{definition}

\begin{theorem}\label{theorem:persistence}
	Let $X$ and $Y$ be temporal digital topology spaces and let $f: X\to Y$ be temporally continuous and let $A\in 2^X, B\in 2^Y$ be temporally video-frame connected subimages over a time interval $[t_0,t']$.  
	\begin{compactenum}[1$^o$]
	\item $A\tnear B$ implies $f(A)\tnear f(B)$ for every $t\in [t_0,t']$.
	\item $A,B$ disappear for every $t > t'$.
	\item $f(A),f(B)$ disappear for every $t > t'$.
	\end{compactenum}
\end{theorem}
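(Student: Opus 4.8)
The plan is to handle the three claims separately, under one standing convention that I would make explicit at the outset: a map $f$ between temporal digital topology spaces acts slice-wise on the time parameter, so that for any subimage $A$ and any time $t$ one has $f(A)_t = f(A_t)$. (I read $A,B$ as subimages of the domain $X$, so that $f(A),f(B)$ make sense and $\tnear$ compares slices inside a single space.) Beyond this, the only facts I need are the elementary set-theoretic ones that $f(\emptyset)=\emptyset$ and that a function preserves nonempty intersections.

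For $1^o$ I would unfold $A\ \tnear\ B$ via Definition~\ref{def:timeDiscreteProximity}: the set $S=\{t\in[t_0,t']: A_t\cap B_t\neq\emptyset\}$ is nonempty. Fixing any $t\in S$ and a point $p\in A_t\cap B_t$, the image $f(p)$ lies in $f(A_t)\cap f(B_t)=f(A)_t\cap f(B)_t$, so this slice-intersection is nonempty as well. Hence every time witnessing overlap for $A,B$ also witnesses overlap for $f(A),f(B)$; the corresponding nonempty set for the images contains $S$, and Definition~\ref{def:timeDiscreteProximity} yields $f(A)\ \tnear\ f(B)$ over the same overlap times in $[t_0,t']$. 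I would point out that this is consistent with, but stronger than, what temporal continuity alone supplies: since overlapping slices are adjacent, $A,B$ are temporally adjacent, and temporal continuity forces $f(A),f(B)$ temporally adjacent; the overlap (not merely adjacency) of the images is exactly what the containment $f(A_t\cap B_t)\subseteq f(A)_t\cap f(B)_t$ buys us directly.

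Claim $2^o$ is immediate from the hypothesis. Both $A$ and $B$ are temporally video-frame connected with the same breakpoint $t'$, so by the definition of temporally video-frame connectedness each is video-frame connected on $[t_0,t']$ while its slices disappear, i.e. $A_t=\emptyset$ and $B_t=\emptyset$, for every $t$ with $t'<t\leq t_1$. For $3^o$ I would combine $2^o$ with the slice-wise action of $f$: for every $t>t'$ we get $f(A)_t=f(A_t)=f(\emptyset)=\emptyset$ and likewise $f(B)_t=\emptyset$, so $f(A)$ and $f(B)$ disappear past $t'$.

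The only real friction in the argument is conceptual rather than computational. The definition of temporal continuity is phrased through the adjacency relation $\knear$, which is strictly weaker than the sharing-of-points relation underlying $\tnear$, so $1^o$ cannot be read off temporal continuity by itself. Pinning down the temporal naturality $f(A)_t=f(A_t)$ and invoking preservation of nonempty intersections is what bridges that gap; once these are in place, each of the three items reduces to unwinding the relevant definition.
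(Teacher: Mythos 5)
The paper states Theorem~\ref{theorem:persistence} with no proof at all (it is one of three results in the paper, together with Lemma~\ref{lemma:temporalProxSpace} and Theorem~\ref{theorem:temporallyContinuous}, left bare), so there is no argument of the authors' to compare yours against; your proposal would effectively supply the missing proof. On its own terms your argument is correct, and the two conventions you announce up front are exactly the repairs the statement needs before any proof can begin: as printed the theorem has $B\in 2^Y$, which makes $f(B)$ and $A\ \tnear\ B$ ill-typed, so reading $A,B\in 2^X$ is forced; and the slice-wise compatibility $f(A)_t=f(A_t)$ is nowhere defined in the paper (the paper never says how a map between temporal spaces acts on the time coordinate), yet without it neither $1^o$ nor $3^o$ can be formulated. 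Granting these, each item goes through as you say: $1^o$ is the containment $f(A_t\cap B_t)\subseteq f(A)_t\cap f(B)_t$ applied at a witness time from Definition~\ref{def:timeDiscreteProximity}, $2^o$ is a literal unwinding of the definition of temporal video-frame connectedness with breakpoint $t'$, and $3^o$ is $2^o$ plus $f(\emptyset)=\emptyset$. Your closing remark is also the mathematically substantive observation: since temporal continuity is defined through temporal adjacency, and overlap implies adjacency but not conversely (the paper's own remark that $\near_d$ implies $\knear$), temporal continuity by itself could only deliver temporal adjacency of $f(A),f(B)$, never the shared points that $\tnear$ demands; so $1^o$ genuinely rests on the slice-wise image containment and, under your conventions, the hypothesis of temporal continuity is not used anywhere in the proof. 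It would be worth flagging that last point as a criticism of the theorem as stated, rather than burying it as a consistency check.
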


\begin{theorem}\label{theorem:temporallyContinuous}
  Let $X$ and $Y$ be temporal digital topology spaces. Then
  \begin{compactenum}[1$^o$]
	\item A map $f: X\to Y$ is temporally continuous if and only, for every pair of $A,B\in 2^X$ such that $A\tnear B$ implies $f(A)\tnear f(B)$. 
	\item A map $f: X\to Y$ is temporally continuous if and only, for every pair of $A,B\in 2^X$ such that $A\mnear B$ implies $f(A)\mnear f(B)$ for all $\varepsilon \leq 1$.
   \end{compactenum}
\end{theorem}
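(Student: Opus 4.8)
The plan is to reduce both biconditionals to a single bridging equivalence between \emph{temporal adjacency} and the temporal discrete proximity $\tnear$, and then to read the definition of temporal continuity through that bridge. Throughout I would work on the common frame interval $[t_0,t_1]$ on which the two videos are defined, so that the common-lifespan hypothesis $\Phi(A)=\Phi(B)=(t_0,t_1)$ required by the definition of temporal adjacency is met for $A,B\in 2^X$ and likewise for their images in $Y$; I would flag at the outset that the theorem's general $A,B\in 2^X$ is to be read against this shared-interval convention.

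First I would prove the bridge: for $A,B\in 2^X$, \emph{$A$ and $B$ are temporally adjacent if and only if $A\tnear B$}. For the ``if'' direction, if $A\tnear B$ then by Definition~\ref{def:timeDiscreteProximity} the set $\{t:A_t\cap B_t\neq\emptyset\}$ is nonempty; at any such $t$ the slices $A_t,B_t$ share a point, so by the remark that discrete proximity implies adjacency they satisfy $A_t\knear B_t$, and taking this nonempty set of times as the period $T$ shows $A,B$ are temporally adjacent. The ``only if'' direction is the delicate one: temporal adjacency yields only $A_t\knear B_t$ on some period $T$, and adjacency alone does not force $A_t\cap B_t\neq\emptyset$. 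Here I would use that, in the underlying digital metric, two slices are adjacent exactly when their distance is at most $1$ (a shared point gives distance $0$, while row/column and diagonal neighbours give distance $1$), so $A_t\knear B_t$ is equivalent to $A_t,B_t$ being $\varepsilon$-near for $\varepsilon\le 1$; by Definition~\ref{def:timeMetricProximity} this gives $A\mnear B$, and then Lemma~\ref{lemma:temporalProxSpace} upgrades $A\mnear B$ to $A\tnear B$. This completes the bridge.

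With the bridge in hand, Part 1$^o$ is immediate: by the definition of temporal continuity, $f$ is temporally continuous iff ($A,B$ temporally adjacent $\Rightarrow f(A),f(B)$ temporally adjacent); applying the bridge to the pair $A,B$ in $X$ and to the pair $f(A),f(B)$ in $Y$ replaces each occurrence of ``temporally adjacent'' by the corresponding $\tnear$ relation, yielding exactly $A\tnear B\Rightarrow f(A)\tnear f(B)$. For Part 2$^o$ I would invoke Lemma~\ref{lemma:temporalProxSpace} on both $X$ and $Y$: since $A\mnear B\iff A\tnear B$ and $f(A)\mnear f(B)\iff f(A)\tnear f(B)$, the condition ``$A\mnear B\Rightarrow f(A)\mnear f(B)$'' is logically equivalent to the $\tnear$-preservation condition of Part 1$^o$, and hence to temporal continuity. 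The restriction $\varepsilon\le 1$ is precisely the regime in which $\varepsilon$-nearness coincides with digital adjacency, which is what makes Lemma~\ref{lemma:temporalProxSpace} applicable at the level of slices.

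I expect the main obstacle to be the ``only if'' half of the bridge, i.e. deducing genuine overlap ($A\tnear B$) from mere adjacency; this step is not formal and leans essentially on Lemma~\ref{lemma:temporalProxSpace} together with the identification of adjacency with $\varepsilon$-nearness at threshold $\varepsilon=1$. A secondary point to check carefully is that the metric $D$ used in the definition of $\varepsilon$-nearness is the one (for instance the Chebyshev/$\ell^\infty$ digital metric) under which both $4$- and $8$-adjacency register as distance $1$, so that ``$\varepsilon\le 1$'' captures all the forms of adjacency introduced earlier; otherwise the equivalence of adjacency with $1$-nearness, and with it the clean reduction of Part 2$^o$ to Part 1$^o$, would need adjustment.
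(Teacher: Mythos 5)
First, a point of comparison that matters here: the paper states Theorem~\ref{theorem:temporallyContinuous} without any proof (just as it states Lemma~\ref{lemma:temporalProxSpace} and Theorem~\ref{theorem:persistence} without proof), so your argument cannot be checked against an official one; it must stand on its own, and it does not. The genuine gap is the ``only if'' half of your bridge: temporal adjacency does \emph{not} imply $A\ \tnear\ B$. By the paper's definition of temporal adjacency, $A,B$ need only have slices with $A_t\ \knear\ B_t$ on some period $T$, and $\knear$ (Definition~\ref{def:adjacent}) expressly allows $p\neq q$ with $p,q$ neighbouring but distinct lattice points. Concretely, let $A$ be the column of voxels at $x=0$ and $B$ the column at $x=1$, both constant over the whole interval $[t_0,t_1]$. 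Every pair $(0,y)\in A_t$, $(1,y)\in B_t$ is row adjacent, so $A$ and $B$ are temporally adjacent; yet $A_t\cap B_t=\emptyset$ for every $t$, so by Definition~\ref{def:timeDiscreteProximity} the relation $A\ \tnear\ B$ fails. Your bridge is therefore false, and both parts of your proof collapse with it.

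The patch you propose (adjacency $=$ $\varepsilon$-nearness at threshold $\varepsilon=1$, then Lemma~\ref{lemma:temporalProxSpace} to upgrade $\mnear$ to $\tnear$) fails exactly at the lemma. That lemma is unproved in the paper, and it cannot hold in the regime where you invoke it: the paper's own Remark~\ref{remark:temporalProximity} states explicitly that ``$A\ \mnear\ B$ holds even if $A_t\cap B_t=\emptyset$,'' i.e.\ $\mnear$ does not imply $\tnear$ once $\varepsilon>0$; the lemma is defensible only at $\varepsilon=0$, where $D(A_t,B_t)=0$ forces a shared point. The two-column example above witnesses this failure: $A\ \mnear\ B$ at $\varepsilon=1$ but not $A\ \tnear\ B$. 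So citing the lemma at $\varepsilon=1$ is not a harmless appeal to a stated result; it imports precisely the false implication your bridge needs, and the restriction ``$\varepsilon\leq 1$'' in part 2$^o$ cannot rescue it. There is also a structural reason no bridge of this kind can work: for any map $f$ that respects time slices, $f(A_t\cap B_t)\subseteq f(A)_t\cap f(B)_t$, so the condition ``$A\ \tnear\ B$ implies $f(A)\ \tnear\ f(B)$'' holds for \emph{every} such $f$, while temporal continuity (preservation of adjacency of possibly disjoint sets) is a genuine constraint. Hence the right-hand side of 1$^o$ is essentially vacuous and cannot be equivalent to temporal continuity via any equivalence between temporal adjacency and $\tnear$ --- which is very likely why the paper leaves this theorem, and Lemma~\ref{lemma:temporalProxSpace}, without proof. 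A correct treatment would have to either strengthen temporal adjacency to require overlap, or replace $\tnear$ by a relation that registers adjacency without intersection.
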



\begin{thebibliography}{99}
\bibitem{Aubert2006heat}
G.~Aubert and P.~Kornprobst, \emph{Mathematical problems in image processing. Partial differential equations and the calculus of variations, 2nd ed.},
Springer-Science, NY, 2006.
	
\bibitem{Bleile:digitaImageConstructions}
B.~Bleile, A.~Garin, T.~Heiss, K.~Maggs, and V.~Robins, \emph{The persistent homology of dual digital image constructions}, Research in computational topology. Assoc. Women Math. Ser. 30 (E.~Gasparovic et~al., ed.), Springer,
Cham, Switzerland, 2022, \url{https://doi.org/10.1007/978-3-030-95519-9_1}, MR4454782, pp.~1--26.


\bibitem{Boxer1999}
L. Boxer, \emph{A classical construction for the digital fundamental group},
Journal of Mathematical Imaging and Vision \textbf{10} (1999), 51--62,
\url{https://doi.org/10.1023/A:1008370600456}.

\bibitem{BoxerStaecker2016}
L.~Boxer and P.C. Staecker, \emph{Connectivity preserving multivalued function in digital topology}, Journal of Mathematical Imaging and Vision \textbf{55}(2016), 370--377, DOI 10.1007/s10851-015-0625-5.
	
\bibitem{HaiderPeters2021temporalProximities}
M.S. Haider and J.F. Peters, \emph{Temporal proximities: self-similar temporally close shapes}, Chaos Solitons Fractals \textbf{151} (2021), no.~111237, 10pp, MR4290188.
	
\bibitem{IsKaraca2023ProximalHomotopy}
M.~Is and I.~Karaca, \emph{Some properties of proximal homotopy theory}, (2023), 1--24, arXiv:2306.07558, url{DOI:10.48550/arxiv.2306.07558}.

\bibitem{Klette2004}
R.~Klette and A.~Rosenfeld, \emph{Digital geometry. geometric methods for digital picture analysis}, Morgan-Kaufmann Pub., Amsterdam, The Netherlands, 2004, MR2095127.
	
\bibitem{Naimpally70}
S.A. Naimpally and B.D. Warrack, \emph{Proximity spaces}, Cambridge Tract in
Mathematics No. 59, Cambridge University Press, Cambridge, UK, 1970, x+128pp.,Paperback (2008), MR0278261.	

\bibitem{PetersVergili2023goodCoverings}
J.F. Peters and T. Vergili, \emph{Good coverings of proximal alexandrov spaces. Path cycles in the extension of the mitsuishi-yamaguchi good covering and jordan curve theorems}, Appl. Gen. Topol. \textbf{24} (2023), no.~1, 24--45, MR4573606.

\bibitem{Rosenfeld:adjacency}
A.~Rosenfeld, \emph{Adjacency in digital pictures}, Information and Control \textbf{26} (1974), 24--33, \url{https://www.jstor.org/stable/43678933}.

\bibitem{Rosenfeld1979} A.~Rosenfeld,
\emph{Digital topology}, The Amer. Math. Monthly \textbf{86} (1979), no.~8, 621--630, Amer. Math. Soc. MR0546174.

\bibitem{SolomonBreckon2011}
C.~Solomon and T.~Breckon, \emph{Fundamentals of digital image processing}, Wiley-Blackwell, London, 2011.

\bibitem{Vergili2020DigitalHausdorff}
T.~Vergili, \emph{Digital hausdorff distance on a connected digital image},
Communications Faculty of Sciences University of Ankara Series A1 Mathematics and Statistics \textbf{69(2)} (2020), 1070--1082, \url{https://doi.org/10.31801/cfsuasmas.620674}.

\bibitem{Willard1970}
S.~Willard, \emph{General topology}, Dover Pub., Inc., Mineola, NY, 1970, xii+369pp, ISBN: 0-486-43479-6 54-02, MR0264581.


\end{thebibliography}
\end{document}